\documentclass[12pt]{article}
\usepackage[
margin=2in,
includefoot,
footskip=12pt,
]{geometry}
\usepackage{layout}
\usepackage{amsmath}
\usepackage{amsfonts}
\usepackage{amssymb}
\usepackage{amsthm,fullpage}
\usepackage{graphics}
\usepackage{float,graphicx}
\usepackage{tikz}
\usepackage{hyperref}


\theoremstyle{plain}
\newtheorem{thm}{Theorem}[section]
\newtheorem{lem}[thm]{Lemma}

\theoremstyle{definition}

\newtheorem*{ex}{Example}
\newtheorem{prop}[thm]{Proposition}

\title{Constructing cospectral graphs by unfolding non-bipartite graphs\footnote{Some of the results of this article are part of the M.S. thesis of last author \cite{hitesh}.}}
\author{
M. Rajesh Kannan \thanks{\texttt{rajeshkannan@math.iith.ac.in, rajeshkannan1.m@gmail.com}, Department of Mathematics, Indian Institute of Technology Hyderabad, Hyderabad 502284, India} \and 
Shivaramakrishna Pragada 
\thanks{\texttt{shivaramkratos@gmail.com}, Department of Mathematics, Simon Fraser University, Burnaby, BC, V5A 1S6, Canada } 
\and 
Hitesh Wankhede 
\thanks{\texttt{hiteshwankhede9@gmail.com}, Department of Mathematics, Indian Institute of Science Education and Research Pune, Pune 411 008, India} 
\thanks{ Theoretical Computer Science Group, The Institute of Mathematical Sciences (HBNI), Chennai 600 113, India}}
\date{\today}
\begin{document}
    \maketitle
    \baselineskip=0.25in

\begin{abstract}
	In 2010, Butler \cite{but-lama} introduced the unfolding operation on a bipartite graph to produce two bipartite graphs, which are cospectral for the adjacency and the normalized Laplacian matrices. In this article, we describe how the idea of unfolding a bipartite graph with respect to another bipartite graph can be extended to nonbipartite graphs. In particular, we describe how unfoldings involving reflexive bipartite, semi-reflexive bipartite, and multipartite graphs are used to obtain cospectral nonisomorphic graphs for the adjacency matrix. 
\end{abstract}

{\bf AMS Subject Classification(2010):} 05C50.

\textbf{Keywords.} Adjacency matrix, Cospectral graphs,   Partitioned tensor product, Reflexive and semi reflexive graphs, Unfolding.
\section{Introduction}

We consider simple and undirected graphs. Let $G=(V(G),E(G))$ be a graph with the vertex set $V(G)=\{1,2,\ldots, n\}$ and the edge set $E(G)$. If two vertices $i$ and $j$ of $G$ are adjacent, we denote it by $i\sim j$. For a vertex $v \in V(G)$, let $d_G(v)$ denote the degree of the vertex $v$ in $G$. For a graph $G$ on $n$ vertices, the \textit{adjacency matrix} $A(G)=[a_{ij}]$ is an $n \times n$ matrix defined by
$$a_{ij}=\begin{cases}1,& \mbox{if}~\ i\sim j,\\0,& \mbox{otherwise}.\end{cases}$$ The \textit{spectrum }of a graph $G$ is the set of all eigenvalues of $A(G)$, with corresponding multiplicities. Two graphs are \textit{cospectral} if the corresponding adjacency matrices have the same spectrum. If any graph which is cospectral with $G$ is also isomorphic to it, then $G$ is said to be \textit{determined by its spectrum} (DS graph for short); otherwise, we say that the graph $G$ has a cospectral mate or we say that $G$ is not determined by its spectrum (NDS for short). To show that a graph is NDS, we construct a cospectral mate. It has been a longstanding problem to characterize graphs that are determined by their spectrum. In \cite{haemers_are_2016}, Haemers conjectured that almost all graphs are DS. Recently, in \cite{arvind2023hierarchy}, Arvind et al. proved that almost all graphs are determined up to isomorphism by their eigenvalues and angles. For surveys on DS and cospectral graphs, we refer to \cite{vandam-haemers, devel-vandam-haemers}. 


The other face of the conjecture about DS graphs is the problem of constructions of cospectral graphs. A well-known method to construct cospectral graphs is Godsil-Mckay switching \cite{godsil-mckay-1982}. Recently, an analog of the switching method was introduced by Wang, Qiu, and Hu \cite{wang2019cospectral, qiu2020theorem}. In \cite{but-lama}, Butler introduced a cospectral construction method based on unfolding a bipartite graph, which works for both normalized Laplacian and adjacency matrices. Later in \cite{kannan-pragada}, Kannan and Pragada generalized this construction and extended the idea to obtain three new cospectral constructions. In \cite{ji-gong-wang}, Ji, Gong, and Wang have generalized the unfolding idea further and given a characterization of isomorphism for their construction. In \cite{godsil-mckay-1976}, Godsil and McKay constructed cospectral graphs for the adjacency matrices using the partitioned tensor product of matrices. Unifying ideas of the articles \cite{godsil-mckay-1976}, \cite{hammack} and \cite{ji-gong-wang}, in  \cite{kph-unf}, we  characterized the isomorphism case of very general unfolding operation on bipartite graphs. In this paper, we use partitioned tensor products to describe unfolding constructions involving reflexive bipartite, semi-reflexive, and multipartite graphs. We also address the isomorphism of the constructed cospectral graphs.


The outline of this paper is as follows: In Section \ref{prelims}, we include some of the needed known results for graphs and matrices. Section \ref{Construct_God_Mckay} discusses the isomorphism case of the construction in \cite{godsil-mckay-1976} involving reflexive bipartite graphs. In Section \ref{Semi_reflex_unfold}, we discuss unfolding involving semi-reflexive bipartite graphs, which extends the Construction III of \cite{kannan-pragada}. Section \ref{Multipart_unfold} is devoted to the study of multipartite unfolding in which we use the partial transpose operation discussed in \cite{dutta}.

\section{Preliminaries}\label{prelims}

The notion of partitioned tensor product of matrices is used extensively in this article. This is closely related to the well-known Kronecker product of matrices. The \emph{Kronecker product} of matrices $A = (a_{ij})$ of size $m \times n$ and $B$ of size $p \times q$, denoted by $A \otimes B$, is the $mp \times nq$ block matrix $(a_{ij}B)$.

The\textit{ partitioned tensor product }of two partitioned matrices $M = \begin{bmatrix}
	U & V \\
	W & X \\
\end{bmatrix}$ and $H = \begin{bmatrix}
	A & B \\
	C & D \\
\end{bmatrix}$, denoted by $M  \underline{\otimes} H$, is defined as $ \begin{bmatrix}
	U\otimes A & V\otimes B \\
	W\otimes C & X\otimes D \\
\end{bmatrix}.$ For the matrix $M\underline{\otimes} H$, the partition defined above is called the canonical partition of the matrix $M \underline{\otimes} H$.   

Given the matrices $U$, $V$, $W$ and $X$, define $\mathcal{I}(U,X)= \begin{bmatrix}U& 0\\0 &X\end{bmatrix}$ and $\mathcal{P}(V,W)= \begin{bmatrix}0 &V\\W& 0\end{bmatrix}$ where $0$ is the zero matrix of appropriate order. A $2\times2$ block matrix is called a \textit{diagonal} (resp., \textit{an anti diagonal}) block matrix if it is of the form $\mathcal{I}(U, X)$ (resp., $\mathcal{P}(V, W)$).   The above notions were introduced by Godsil and McKay \cite{godsil-mckay-1976}. The following proposition is easy to verify.

\begin{prop}\label{parti-diag}
	Let $Q$ and $R$ be the matrices of the form $\mathcal{I}(Q_1,Q_2)$ and $\mathcal{I}(R_1,R_2)$, respectively. If $M = \begin{bmatrix}
		U & V \\
		W & X \\
	\end{bmatrix}$ and $H = \begin{bmatrix}
		A & B \\
		C & D \\
	\end{bmatrix}$ are $2 \times 2$ block matrices, then $$(Q \underline{\otimes} R)(M \underline{\otimes} H)=(QM) \underline{\otimes} (RH).$$ The same holds true when the matrices $Q$ and $R$ are both of the form $\mathcal{P}(Q_1,Q_2)$ and $\mathcal{P}(R_1,R_2)$, respectively.
\end{prop}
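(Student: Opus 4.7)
The plan is to verify both identities by a direct block computation, invoking the mixed product property of the ordinary Kronecker product,
\[
(A\otimes B)(C\otimes D) = (AC)\otimes(BD),
\]
wherever the block sizes are compatible.

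For the diagonal case, I would first expand $Q\underline{\otimes} R$ using the definition to get the block-diagonal matrix whose diagonal blocks are $Q_1\otimes R_1$ and $Q_2\otimes R_2$. Multiplying this on the right by the canonical partition of $M\underline{\otimes} H$ produces four blocks of the form $(Q_i\otimes R_j)(Y\otimes Z)$ where $(Y,Z)$ runs over the four pairs $(U,A),(V,B),(W,C),(X,D)$ with the appropriate choice of $i,j$; applying the mixed product property to each block reduces it to $(Q_iY)\otimes(R_jZ)$. Separately, I would compute $QM$ and $RH$ as ordinary block products, observe that they are again $2\times 2$ block matrices whose $(p,q)$-block is $Q_p$ times the $(p,q)$-block of $M$ (and similarly for $R$ and $H$), and then compare the canonical partition of $(QM)\underline{\otimes}(RH)$ block-by-block with the expression obtained above. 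The two $2\times 2$ block forms agree entry by entry, which establishes the identity.

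For the anti-diagonal case, the same strategy applies, with the single bookkeeping twist that multiplying by $\mathcal{P}(Q_1,Q_2)$ (resp.\ $\mathcal{P}(R_1,R_2)$) swaps the block rows of $M$ (resp.\ of $H$). Thus the $(1,1)$-block of $(QM)\underline{\otimes}(RH)$ is $(Q_1W)\otimes(R_1C)$, and similarly for the other three blocks, and one checks that these coincide with the block products coming from $(Q\underline{\otimes} R)(M\underline{\otimes} H)$, again by the mixed product property.

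The argument is essentially bookkeeping, so there is no real obstacle; the only point requiring a moment of care is ensuring that the block dimensions of $Q$, $R$, $M$, $H$ are compatible in the correct way for both the outer block multiplication and each inner Kronecker product to make sense. In particular, for the diagonal case the column partition of $Q$ must match the row partition of $M$, and for the anti-diagonal case the ``swapped'' version of this compatibility is needed; under these standing assumptions, both identities follow by inspection.
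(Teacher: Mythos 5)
Your argument is correct and is exactly the direct verification the paper has in mind (the paper omits the proof, calling the proposition ``easy to verify''): expand the block products, apply the mixed product property $(A\otimes B)(C\otimes D)=(AC)\otimes(BD)$ to each of the four blocks, and compare with the canonical partition of $(QM)\underline{\otimes}(RH)$. Both the diagonal and anti-diagonal cases check out as you describe, including the row-swap bookkeeping in the $\mathcal{P}$ case.
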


Two matrices $A$ and $B$ are said to be \textit{equivalent}, if there exist invertible matrices $P$ and $Q$ such that $Q^{-1}AP=B$. If the matrices $P$ and $Q$ are orthogonal, then matrices $A$ and $B$ are said to be \textit{orthogonally equivalent}. If the matrices $P$ and $Q$ are permutation matrices, then matrices $A$ and $B$ are said to be \textit{permutationally equivalent}. Using the singular value decomposition, it is easy to see that any square matrix is orthogonally equivalent to its transpose.

A square matrix $A$ is said to be a \textit{PET} (resp. \textit{PST}) matrix if it is {permutationally equivalent} (resp. {similar}) to its transpose. If the set of row sums of an $n \times n$ matrix $A$ is different from the set of column sums of $A$, then $A$ is non-PET.

Next, we recall the cancellation law of matrices given by Hammack.

\begin{thm}[{\cite[Lemma 3]{hammack}}]\label{cancellation} Let $A$, $B$ and $C$ be $(0,1)$-matrices. Let $C$ be a non-zero matrix and $A$ be a square matrix with no zero rows \footnote{The assumption that \lq $A$ has no zero rows\rq can also be replaced with the assumption \lq $A$ has no zero columns\rq (see \cite{hammack}). Hence, we interpret this assumption as \lq $A$ cannot have both a zero row and a zero column\rq.}. Then, the matrices $C\otimes A$ and $C\otimes B$ are permutationally equivalent if and only if $A$ and $B$ are permutationally equivalent. Similarly, the matrices $A\otimes C$ and $B\otimes C$ are permutationally equivalent if and only if $A$ and $B$ are permutationally equivalent.
\end{thm}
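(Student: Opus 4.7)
For the ``if'' direction of both equivalences, if $A = P^{\top} B Q$ with $P, Q$ permutation matrices, the identities
\[
C \otimes A \;=\; (I \otimes P)^{\top}\,(C \otimes B)\,(I \otimes Q), \qquad A \otimes C \;=\; (P \otimes I)^{\top}\,(B \otimes C)\,(Q \otimes I)
\]
immediately give the required permutation equivalences, since Kronecker products of identity matrices with permutation matrices are again permutation matrices.

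For the converse of the first statement, suppose $C\otimes A$ and $C\otimes B$ are permutationally equivalent, with $C \in \{0,1\}^{m\times n}$ and $A \in \{0,1\}^{p\times p}$. The crucial observation is that the row of $C\otimes A$ indexed by $(i,k)\in[m]\times[p]$ is $c_{i}\otimes a_{k}$, where $c_{i}$ denotes the $i$-th row of $C$ and $a_{k}$ the $k$-th row of $A$; since $A$ has no zero rows, this row vanishes iff $c_{i}=0$. Consequently $C\otimes A$ has $sp$ non-zero rows, where $s\geq 1$ is the number of non-zero rows of $C$, and preservation of this count under permutation equivalence forces $B$ to be $p\times p$ with no zero rows. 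The non-zero rows of $C\otimes A$ moreover partition naturally into $s$ \emph{row-blocks} (one per non-zero $c_{i}$); every row in the block of index $i$ shares the column-block support $\{j : c_{ij}=1\}$, and for any $j$ with $c_{ij}=1$, the restriction of this row-block to the $j$-th column-block is exactly the matrix $A$. The analogous description holds for $C\otimes B$ with $B$ in place of $A$.

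The remainder of the argument is to convert the given permutation equivalence into permutation matrices witnessing $A\sim_{\mathrm{perm}} B$ by matching row-blocks and column-blocks. The \textbf{main obstacle} is that the row and column permutations in the assumed equivalence need not respect the block partitions induced by $C$, so the ``$A$ sits as an $(i,j)$-block'' description is not literally preserved. The plan to overcome this is an alignment step: one applies the same counting/support analysis to columns (legitimately, by the footnote's symmetric form of the hypothesis) to show that the column permutation can be adjusted, within its permutation-equivalence class, to one respecting the column-block partition of $C$; an analogous adjustment yields a block-respecting row permutation. Once such block-respecting permutations are in hand, choosing any single index pair $(i,j)$ with $c_{ij}=1$ and comparing the $(i,j)$-blocks on either side produces a relation $P_{i}\,A\,Q_{j}^{-1}=B$ with $P_{i}, Q_{j}$ permutation matrices, i.e.\ $A\sim_{\mathrm{perm}} B$. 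The cancellation for $A\otimes C$ versus $B\otimes C$ follows by the same argument with the roles of the two tensor factors interchanged throughout.
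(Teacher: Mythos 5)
The paper does not actually prove this statement; it is quoted verbatim from Hammack (\cite[Lemma 3]{hammack}) and used as a black box, so there is no in-paper proof to compare against. Judged on its own, your proposal is sound on the easy parts: the ``if'' direction via $C\otimes(P^{\top}BQ)=(I\otimes P)^{\top}(C\otimes B)(I\otimes Q)$ is complete, and the counting of non-zero rows correctly forces $B$ to be $p\times p$ with no zero rows.

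The converse, however, has a genuine gap exactly at the step you flag as the ``main obstacle.'' The assertion that the row and column permutations in $R(C\otimes A)S=C\otimes B$ can be adjusted, within their equivalence class, to permutations respecting the block partition is essentially equivalent to the theorem itself: once you know $A$ and $B$ are permutationally equivalent, block-respecting witnesses exist (take $I\otimes P$ and $I\otimes Q$), but deducing their existence \emph{before} knowing $A\sim B$ is the entire difficulty, and the ``counting/support analysis'' you propose cannot deliver it. Concretely, take $C=J_2$ (the all-ones $2\times 2$ matrix): every row of $C\otimes A$ has column-block support $\{1,2\}$ and every column has row-block support $\{1,2\}$, so no support or degree invariant distinguishes which block a row or column belongs to, and rows from distinct blocks can be literally identical. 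The given $R$ and $S$ may scramble indices across blocks in a way that no local re-sorting undoes, and your plan supplies no mechanism for producing the aligned $R',S'$. Closing this gap requires a genuinely different idea --- Hammack's argument runs through a Lov\'{a}sz-style counting of structure-preserving maps, which is multiplicative over Kronecker products and characterizes permutation equivalence, with the ``no zero rows'' hypothesis entering to make the relevant counts cancellable --- rather than a direct realignment of the permutations. As written, the proposal is an accurate description of the obstacle together with an unsubstantiated claim that it can be overcome.
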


Let $G$ be a bipartite graph with vertex partition $V(G)=X\cup Y$; $G$ is  \textit{semi reflexive} if each vertex in either $X$ or $Y$ has a loop, and \textit{reflexive} if each vertex in $V(G)$ has a loop. If the degrees of all vertices in one of the partite sets is $k$ and the degrees in the other is $l$, then $G$ is said to be $(k,l)$-biregular. 
Two graphs $G_1$ and $G_2$ are isomorphic if and only if the corresponding adjacency matrices $A(G_1)$ and $A(G_2)$ are permutationally similar. An \textit{automorphism} of a graph $G$ is an isomorphism from the graph $G$ to itself.  Every automorphism of a graph $G$ on $n$ vertices can be represented by an $n \times n$ permutation matrix $P$ such that $P^TA(G)P=A(G)$.

\section{Construction I - Unfoldings involving a reflexive bipartite graph}\label{Construct_God_Mckay}

We first recall a cospectral construction by Godsil and Mckay \cite{godsil-mckay-1976}. The main objective of this section is to investigate conditions under which these constructed cospectral graphs are isomorphic. Let $V$ and $W$ denote matrices of size $m\times n$ and $n\times m$ respectively. Let $I_m$ and $I_n$ denote identity matrices of the order $m$ and $n$, respectively. Also, let $A$, $B$, $C$ and $D$ be matrices of size $p\times p$, $p\times q$, $q\times p$ and $q\times q$ respectively. Define the partitioned matrices  $L=\begin{bmatrix}I_m&V\\W&I_n\end{bmatrix}$, $H=\begin{bmatrix}A&B\\C&D\end{bmatrix}$ $H^{\#}=\begin{bmatrix}D&C\\B&A\end{bmatrix}$, and the partitioned tensor products \begin{equation}\label{equ-sec1.1} L\underline{\otimes} H=\begin{bmatrix}I_m\otimes A&V\otimes  B\\W\otimes C&I_n\otimes D\end{bmatrix}, ~~~~~~\;L\underline{\otimes}  H^{\#}=\begin{bmatrix}I_m\otimes D&V\otimes C\\W\otimes B&I_n\otimes  A\end{bmatrix}. \end{equation}
For the partitioned tensor products, the following result is known.
\begin{thm}[{\cite{godsil-mckay-1976}}]\label{gm-cospec}
	The matrices $L\underline{\otimes} H$ and $L\underline{\otimes} H^{\#}$ have the same eigenvalues if and only if either $m=n$ or the blocks $A$ and $D$ have the same eigenvalues. 
\end{thm}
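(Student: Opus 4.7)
The plan is to compute both characteristic polynomials via repeated Schur complements and then compare them factor-by-factor. For $\lambda$ outside the spectrum of $A$, view $\lambda I-L\underline{\otimes}H$ as a $2\times 2$ block matrix whose $(1,1)$-block $I_m\otimes(\lambda I_p-A)$ is invertible. The Schur-complement formula, together with the identity $(W\otimes C)(I_m\otimes M)(V\otimes B)=WV\otimes CMB$ applied with $M=(\lambda I_p-A)^{-1}$, yields
$$\det(\lambda I-L\underline{\otimes}H) = \det(\lambda I_p-A)^m\,\det\bigl(I_n\otimes(\lambda I_q-D)-WV\otimes C(\lambda I_p-A)^{-1}B\bigr).$$

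Next I Schur-triangularize $WV$ as $U^{*}(WV)U$ with diagonal entries $\mu_1,\ldots,\mu_n$. Conjugating the inner matrix by $U\otimes I_q$ turns it into a block upper triangular matrix whose diagonal blocks are $(\lambda I_q-D)-\mu_i C(\lambda I_p-A)^{-1}B$. A second Schur complement, applied in the reverse direction to each diagonal block, then produces
$$\det(\lambda I-L\underline{\otimes}H) = \det(\lambda I_p-A)^{m-n}\prod_{i=1}^{n}\det\begin{bmatrix}\lambda I_p-A & -B \\ -\mu_i C & \lambda I_q-D\end{bmatrix}.$$
The same argument applied to $L\underline{\otimes}H^{\#}$, this time with $I_m\otimes(\lambda I_q-D)$ as the pivot block, gives the symmetric formula
$$\det(\lambda I-L\underline{\otimes}H^{\#}) = \det(\lambda I_q-D)^{m-n}\prod_{i=1}^{n}\det\begin{bmatrix}\lambda I_q-D & -C \\ -\mu_i B & \lambda I_p-A\end{bmatrix}.$$

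The crux is matching the two products factor by factor. Swapping the two block rows and then the two block columns in the $i$-th factor of the second product contributes no net sign (each swap gives $(-1)^{pq}$) and turns it into $\det\begin{bmatrix}\lambda I_p-A & -\mu_i B \\ -C & \lambda I_q-D\end{bmatrix}$; for $\mu_i\neq 0$, the diagonal similarity by $\mathrm{diag}(\mu_i I_p,I_q)$ then converts this into the $i$-th factor of the first product, while for $\mu_i=0$ both determinants collapse to $\det(\lambda I_p-A)\det(\lambda I_q-D)$. Consequently,
$$\frac{\det(\lambda I-L\underline{\otimes}H)}{\det(\lambda I-L\underline{\otimes}H^{\#})}=\left(\frac{\det(\lambda I_p-A)}{\det(\lambda I_q-D)}\right)^{m-n}$$
as rational functions of $\lambda$, so the two characteristic polynomials agree iff $m=n$ or $\det(\lambda I_p-A)=\det(\lambda I_q-D)$; the latter forces $p=q$ and $\spec(A)=\spec(D)$. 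The main obstacle is avoiding a diagonalizability assumption on $WV$, which is precisely what Schur triangularization circumvents, since the resulting block upper triangular matrix still has determinant equal to the product of the diagonal blocks' determinants; handling the $\mu_i=0$ factor in the matching step is a minor secondary subtlety, addressed as above.
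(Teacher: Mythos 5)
Your proof is correct, and it is worth noting that the paper itself does not prove this theorem at all: it defers to the original reference of Godsil and McKay and only establishes a special case (Lemma~\ref{thm-unf-ref-cospec}, the ``if'' direction when $m=n$, $W=V^T$, $C=B^T$) by exhibiting an explicit orthogonal similarity $S=Q\underline{\otimes}R$ built from $Q_1^TVQ_2=V^T$. Your route is genuinely different and strictly stronger in what it delivers: the two Schur complements, the identity $(W\otimes C)(I_m\otimes M)(V\otimes B)=WV\otimes CMB$, and the Schur triangularization of $WV$ give the clean factorization of both characteristic polynomials over the eigenvalues $\mu_i$ of $WV$, and your factor-matching (block row/column swaps with net sign $(-1)^{2pq}=1$, the diagonal similarity $\mathrm{diag}(\mu_i I_p,I_q)$ for $\mu_i\neq0$, and the block-triangular collapse for $\mu_i=0$) correctly reduces everything to the ratio $\bigl(\det(\lambda I_p-A)/\det(\lambda I_q-D)\bigr)^{m-n}$, from which both implications follow by unique factorization of monic polynomials. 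The similarity argument in the paper can never yield the ``only if'' direction, which your computation does. Two minor points you should make explicit if you write this up: first, ``same eigenvalues'' must be read with multiplicities (equal characteristic polynomials), since $L\underline{\otimes}H$ and $L\underline{\otimes}H^{\#}$ have orders $mp+nq$ and $mq+np$ respectively --- a discrepancy of $(m-n)(p-q)$ that exactly matches the degree of your rational-function ratio, a reassuring consistency check; second, the identities are derived only for $\lambda\notin\spec(A)\cup\spec(D)$ and then extended by the identity theorem for rational functions, which you do gesture at and which is fine.
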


For the proof of this theorem, we refer to \cite{godsil-mckay-1976}. In this section, we consider a special case of this construction. We assume that the partitioned matrices $L$ and $H$ are adjacency matrices of some simple graphs, that is, $W=V^T$, $C=B^T$ and $A$ and $D$ are symmetric and have zero diagonal entries. {For an $n \times n$ symmetric $(0, 1)$-matrix $A$ with zero diagonal entries, let $G_A$ denote the simple graph whose adjacency matrix is $A$.} Since the graphs $G_A$ and $G_D$ do not have any loops, $G_{L\underline{\otimes} H}$ and $G_{L\underline{\otimes} H^{\#}}$ also do not have any loops. Under the assumptions made, the involved partitioned tensor products are given by:$$L\underline{\otimes} H=\begin{bmatrix}I_m\otimes A&V\otimes  B\\V^T\otimes B^T&I_n\otimes D\end{bmatrix}, \;L\underline{\otimes}  H^{\#}=\begin{bmatrix}I_m\otimes D&V\otimes B^T\\V^T\otimes B&I_n\otimes  A\end{bmatrix}.$$ 

We call $G_{L\underline{\otimes} H}$ and $G_{L\underline{\otimes} H^{\#}}$ the graphs obtained by unfolding the graph $G_H$ with respect to reflexive bipartite graph $G_L$. The vertex partition induced naturally by the partitioned tensor product on the graph $G_{L\underline{\otimes} H}$ is defined to be the canonical vertex partition of $G_{L\underline{\otimes} H}$. In the next lemma, we assume that $V$ is a square matrix and give a short proof for the cospectrality of $G_{L\underline{\otimes} H}$ and $G_{L\underline{\otimes} H^{\#}}$.

\begin{lem}\label{thm-unf-ref-cospec}
	If $V$ and $W$ are square matrices, then the graphs $G_{L\underline{\otimes}H}$ and $G_{L\underline{\otimes}H^{\#}}$ are cospectral for the adjacency matrix.  
\end{lem}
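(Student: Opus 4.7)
The plan is to reduce to the case where $V$ is diagonal using the singular value decomposition, and then read off cospectrality from a block-row/column swap.

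Since $V$ is $n \times n$, I would write its singular value decomposition as $V = U_1 \Sigma U_2^T$, with $U_1, U_2$ orthogonal and $\Sigma$ diagonal (so $V^T = U_2 \Sigma U_1^T$). First, conjugate $L\underline{\otimes} H$ by the orthogonal matrix $S = \mathcal{I}(U_1 \otimes I_p,\, U_2 \otimes I_q)$. Applying the mixed-product property $(X\otimes Y)(W\otimes Z)=(XW)\otimes (YZ)$ blockwise, the diagonal blocks $I_n \otimes A$ and $I_n \otimes D$ are preserved, while the off-diagonal blocks $V \otimes B$ and $V^T \otimes B^T$ are sent to $\Sigma \otimes B$ and $\Sigma \otimes B^T$ respectively. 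An analogous computation with $T = \mathcal{I}(U_1 \otimes I_q,\, U_2 \otimes I_p)$ reduces $L \underline{\otimes} H^{\#}$ to the matrix obtained from the previous one by interchanging the positions of $A$ and $D$.

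Having performed these reductions, the two canonical matrices
\[
N_1 = \begin{bmatrix} I_n\otimes A & \Sigma\otimes B \\ \Sigma\otimes B^T & I_n\otimes D\end{bmatrix}, \qquad
N_2 = \begin{bmatrix} I_n\otimes D & \Sigma\otimes B^T \\ \Sigma\otimes B & I_n\otimes A\end{bmatrix}
\]
are manifestly related by a simultaneous swap of the two block rows and the two block columns. Since both matrices have total order $n(p+q)$, this block swap is realised by a genuine permutation matrix $P$, and a direct computation gives $P^T N_1 P = N_2$. Composing the three similarities yields an orthogonal similarity between $L\underline{\otimes} H$ and $L\underline{\otimes} H^{\#}$, which is enough to conclude cospectrality for the symmetric adjacency matrices.

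The only delicate point is the block-size bookkeeping in writing down the two conjugators $S$ and $T$: because $H^{\#}$ carries the $p \times p$ block $A$ in the $(2,2)$ corner rather than the $(1,1)$ corner, the identity factors $I_p$ and $I_q$ must switch positions between $S$ and $T$. Once that is tracked, the argument is purely mechanical, resting only on the mixed-product property of $\otimes$ and the basic SVD identities $U_1^T V U_2 = \Sigma = U_2^T V^T U_1$.
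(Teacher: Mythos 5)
Your proof is correct and rests on the same two ingredients as the paper's: the orthogonal equivalence of $V$ with $V^T$ coming from the SVD, and a block-row/column swap exchanging the $A$- and $D$-blocks. The paper merely packages the composition of your three conjugators into a single anti-diagonal partitioned tensor product $\mathcal{P}(Q_1,Q_2)\underline{\otimes}\mathcal{P}(I_p,I_q)$ with $Q_1^TVQ_2=V^T$, so the difference is purely organizational.
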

\begin{proof}
	Since $V$ is a square matrix, there exist two orthogonal matrices $Q_1$ and $Q_2$ such that $Q_1^TVQ_2=V^T$. Define $Q=\mathcal{P}(Q_1,Q_2)$. Then $Q$ is orthogonal and $Q^TLQ=L$. 
	Let $R=\mathcal{P}(I_p,I_q)$, then $R$ satisfies $R^THR=H^{\#}$. Define $S=Q\underline{\otimes} R$, then $S$ is an orthogonal matrix. Now using Proposition \ref{parti-diag}, we get $S^{T}(L\underline{\otimes} H)S=L\underline{\otimes} H^{\#}$. Thus $G_{L\underline{\otimes}H}$ and $G_{L\underline{\otimes}H^{\#}}$ are cospectral. 
\end{proof}

Next, we investigate conditions under which the constructed cospectral graphs given by Theorem \ref{gm-cospec} are isomorphic. Given two graphs $G$ and $H$ with the vertex partitions $V(G) = X \cup Y$ and $V(H) = V \cup W$, we say an isomorphism $f$ from $G$ to $H$ \emph{respects the partition} if $f(X) = V$ or $f(X) = W$. In the next two results, the matrices $A, B, C, D$ and $V$ are defined as in equation (\ref{equ-sec1.1}).


 


\begin{lem}\label{prop:eta}
	Let $G_{L}$ be a reflexive $(k,l)$-biregular bipartite graph with $k \neq l$. If the graphs $G_{L\underline{\otimes}H}$ and $G_{L\underline{\otimes}H^{\#}}$ are isomorphic, then any isomorphism between them respects the canonical vertex partitions of $G_{L\underline{\otimes}H}$ and $G_{L\underline{\otimes}H^{\#}}$.
\end{lem}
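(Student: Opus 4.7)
The plan is to combine a cardinality argument driven by the biregularity of $G_L$ with a vertex-invariant analysis that rules out any mixed image.

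\textbf{Cardinality reduction.} Since $G_L$ is $(k,l)$-biregular, double-counting edges of $G_L$ gives $mk=nl$; combined with $k\ne l$, this forces $m\ne n$. Any isomorphism $f\colon G_{L\underline{\otimes}H}\to G_{L\underline{\otimes}H^{\#}}$ preserves the total number of vertices, so $mp+nq=mq+np$, whence $(m-n)(p-q)=0$, and hence $p=q$. The canonical parts therefore satisfy
\[
|X|=mp=|X'|,\qquad |Y|=np=|Y'|,\qquad |X|\ne|Y|.
\]
In particular $|X|\ne|Y'|$, so the ``swapped'' option $f(X)=Y'$ is ruled out on cardinality grounds.

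\textbf{Excluding mixed images.} It remains to exclude the case in which $f(X)$ meets both $X'$ and $Y'$ nontrivially. Reading degrees off the block form
\[
L\underline{\otimes}H=\begin{bmatrix}I_m\otimes A & V\otimes B\\ V^T\otimes B^T & I_n\otimes D\end{bmatrix},
\]
a vertex $(i,u)\in X$ has degree $r_A(u)+k\,r_B(u)$, while a vertex $(j,w)\in Y'$ of $G_{L\underline{\otimes}H^{\#}}$ has degree $r_A(w)+l\,r_B(w)$. The two expressions involve the same functions $r_A,r_B$ of the second coordinate, but with scalar factor $k$ on the $X$-side and $l$ on the $Y'$-side. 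Equating these degrees for each hypothetical crossing $v\mapsto v'$, combining with the symmetric crossings $Y\to X'$, and invoking $mk=nl$ together with $k\ne l$, should force the relevant bipartite quantities to vanish at every crossing index, contradicting the fact that $V$ is a nonzero biregular matrix.

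\textbf{Main obstacle.} The cardinality reduction is immediate; the crux is the exclusion of mixed images. The degree identity $r_A(u)+k\,r_B(u)=r_A(w)+l\,r_B(w)$ can be satisfied with $u\ne w$, so a single crossing yields no contradiction. Closing the argument requires either a global double-count over all crossings, using $mk=nl$ and $k\ne l$ to extract a contradiction, or a finer vertex invariant (such as the number of triangles through a vertex, which exhibits the same $k$-versus-$l$ dichotomy between $X$ and $Y'$) producing the needed sharper obstruction.
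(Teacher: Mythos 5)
Your cardinality reduction is correct and is a genuine (if unneeded) observation: isomorphic graphs have equally many vertices, so $mp+nq=mq+np$, and since $mk=nl$ with $k\neq l$ forces $m\neq n$, you get $p=q$ and $|X_1|=|X_2|\neq|Y_1|=|Y_2|$. But this only disposes of the wholesale swap $f(X)=Y'$, which the lemma's conclusion would in any case permit (``respects the partition'' allows either $f(X)=X'$ or $f(X)=Y'$). The entire content of the lemma is the exclusion of mixed images, and that is precisely the step you leave open: you write down the correct degree identity for a hypothetical crossing and then acknowledge that a single crossing yields no contradiction, gesturing at a global double-count or a triangle count without carrying either out. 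As it stands the proposal is not a proof.

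The idea that closes the gap in the paper is an extremal (greedy peeling) argument, and it is worth seeing why it works where a pointwise degree comparison does not. Say $V$ has row sum $l$ and column sum $k$, and suppose $k<l$. Take $x\in X_1$ of \emph{maximum} degree in $X_1$, so $d(x)=a_i+lb_i$ for some $i$. If $f(x)$ lay in $Y_2$, its degree there would be $a_j+kb_j$ for some $j$; equating and using $k<l$ gives $a_i+lb_i=a_j+kb_j\le a_j+lb_j$, and the right-hand side is the degree of an actual vertex of $X_1$ (the one with $H$-index $j$). The crucial move is to compare $f(x)$ not with $x$ but with \emph{another} vertex of $X_1$ carrying the index $j$; maximality of $d(x)$ then forces the inequality to collapse and yields the contradiction (modulo the degenerate case $b_j=0$, which the paper glosses over). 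One then deletes all maximum-degree vertices of $X_1$ together with their images, observes that the residual graphs are again of the same partitioned-tensor form with $B$ shrunk by the corresponding rows and columns, and iterates until $X_1$ is exhausted, concluding $f(X_1)=X_2$. If you want to salvage your own route, the triangle-count invariant you mention would face the same difficulty (the identity can hold pointwise for $u\neq w$); some form of ordering or induction over the degree classes appears unavoidable.
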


\begin{proof}
	Let the graphs $\Gamma_1=G_{L\underline{\otimes} H}$ and $\Gamma_2=G_{L\underline{\otimes} H^{\#}}$ be isomorphic, and let $f$ be an isomorphism from $\Gamma_1$ to $\Gamma_2$. Let $V(\Gamma_1)=X_1\cup Y_1$ and $V(\Gamma_2)=X_2\cup Y_2$ be the canonical vertex partitioning of the graphs $\Gamma_i$ for $i=1,2$. Let $b_i$ and $b_i'$ denote the $i^{th}$ row sum of the matrices $B$ and $B^T$, respectively. Let $a_i$ and $d_i$ denote the $i^{th}$ row sums of $A$ and $D$ respectively. Let $V$ have constant row sum $l$ and column sum $k$. 
	
	Suppose $k<l$.  Let $x\in X_1$ be the vertex of maximum degree in this set. Then, we will show that $f(x)\in X_2$. On the contrary, suppose $f(x)\in Y_2$. Then, $d_{\Gamma_1}(x)=a_i+lb_i$ for some $1\leq i\leq p$ and $d_{\Gamma_2}(f(x))=kb_j+a_j$ for some $1\leq j\leq p$. Since the isomorphism preserves the degrees, we have $a_i+lb_i=kb_j+a_j$. But as $k<l$, we have $a_i+lb_i < a_j +lb_j$; this is a contradiction as $x$ has a maximum degree in $X_1$. Thus $f(x)\in X_2$. 
	
	Let $x_1,\ldots, x_{rm}$ be the vertices in $X_1$ with the same maximum degree such that \\ $d_{\Gamma_1}(x_{1+(s-1)m})=\ldots=d_{\Gamma_1}(x_{m+(s-1)m})=a_{i_s}+lb_{i_s}$ for $s\in \{1,2,\ldots, r\}$ where $a_{i_1}+lb_{i_1}=\ldots=a_{i_r}+lb_{i_r}$ for $1\leq i_1,\ldots, i_r\leq p$. Then, using the previous argument $f(x_1),\ldots, f(x_{rm})$ are vertices in $X_2$ such that $d_{\Gamma_2}(f(x_{1+(s-1)m}))=\ldots=d_{\Gamma_2}(f(x_{m+(s-1)m}))=d_{j_s}+lb'_{j_s}$ for $s\in \{1,2,\ldots, r\}$ where $d_{j_1}+lb'_{j_1}=\ldots=d_{j_r}+lb'_{j_r}$ for $1\leq j_1,\ldots, j_r\leq p$. Let $B'$ and $B''$ be the matrices obtained by removing $i_s^{th}$ row and $j_s^{th}$ column respectively from $B$ for all $s\in \{1,2,\ldots, r\}$. Define $A'$ and $D''$ to be the matrices obtained by removing $i_s^{th}$ row and $j_s^{th}$ row from $A$ and $D$ respectively for all $s\in \{1,2,\ldots, r\}$. Define $\Gamma_1'$ and $\Gamma_2'$ to be the induced graphs corresponding to the adjacency matrices $$\begin{bmatrix}I_m\otimes A'&V\otimes  B'\\V^T\otimes  B'^T&I_n\otimes  D\end{bmatrix}\; \text{and} \begin{bmatrix}I_m\otimes D''&V\otimes  B''^T\\V^T\otimes  B''&I_n\otimes  A\end{bmatrix}$$ respectively. Note that the sizes of the matrices $B'$ and $B''$ are $(p-r)\times q$ and $q\times (p-r)$ respectively. Now $\Gamma_1'$ and $\Gamma_2'$ are isomorphic as well, apply the same argument for $\Gamma_1'$ and $\Gamma_2'$ until the graphs reduce to $G_{I_n\otimes D}$ and $G_{I_n\otimes A}$ respectively. Thus $f(X_1)=X_2$ and hence $f(Y_1)=Y_2$. The proof for the case $k>l$ is similar. 
\end{proof}

In the following theorem, assuming that $B$ and $V$ cannot have both a zero row and a zero column and using Hammack's cancellation law, we give a necessary condition for the constructed graphs to be isomorphic. 

\begin{thm} \label{ref-bip-necc-cond} Let $G_{L}$ be a reflexive $(k,l)$-biregular bipartite graph with $k \neq l$. If the graphs $G_{L\underline{\otimes} H}$ and $G_{L\underline{\otimes} H^{\#}}$ isomorphic, then $B$ is PET matrix and the graphs $G_{A}$ and $G_{D}$ are isomorphic.
\end{thm}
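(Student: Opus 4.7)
The plan is to combine Lemma~\ref{prop:eta} with Hammack's cancellation law (Theorem~\ref{cancellation}). Since $G_L$ is $(k,l)$-biregular with $k\neq l$, Lemma~\ref{prop:eta} ensures that any isomorphism between $G_{L\underline{\otimes} H}$ and $G_{L\underline{\otimes} H^{\#}}$ respects the canonical vertex partitions, so its associated permutation matrix has the block-diagonal form $P=\mathcal{I}(P_1,P_2)$, with $P_1$ of order $mp$ and $P_2$ of order $nq$. Matching the sizes of the corresponding diagonal blocks of $L\underline{\otimes} H$ and $L\underline{\otimes} H^{\#}$ forces $p=q$, so that $B$ is square and the notion of PET is meaningful.

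Expanding the identity $P^{T}(L\underline{\otimes} H)P=L\underline{\otimes} H^{\#}$ block by block yields
\begin{equation*}
P_1^{T}(I_m\otimes A)P_1=I_m\otimes D\qquad\text{and}\qquad P_1^{T}(V\otimes B)P_2=V\otimes B^T.
\end{equation*}
The first identity exhibits $I_m\otimes A$ and $I_m\otimes D$ as permutationally similar, so the graphs $G_{I_m\otimes A}$ and $G_{I_m\otimes D}$ are isomorphic. Since these are the disjoint unions of $m$ copies of $G_A$ and of $m$ copies of $G_D$ respectively, uniqueness of the decomposition into connected components yields $G_A\cong G_D$.

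The second identity says that $V\otimes B$ and $V\otimes B^T$ are permutationally equivalent. The matrix $V$ is a nonzero $(0,1)$-matrix, since $k\neq l$ precludes the case $k=l=0$, and by the standing hypothesis the square matrix $B$ does not have both a zero row and a zero column. Applying Theorem~\ref{cancellation} with $C=V$ cancels the left factor and shows that $B$ and $B^T$ are permutationally equivalent; by definition $B$ is a PET matrix.

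The delicate step is passing from the permutational similarity of $I_m\otimes A$ and $I_m\otimes D$ to the isomorphism $G_A\cong G_D$. A direct invocation of Theorem~\ref{cancellation} would only give permutational equivalence of $A$ and $D$, which in general is strictly weaker than the permutational similarity needed for a graph isomorphism between $G_A$ and $G_D$. The component-counting argument above bypasses this gap by arguing graph-theoretically from the permutational similarity (not mere equivalence) that we actually have between $I_m\otimes A$ and $I_m\otimes D$.
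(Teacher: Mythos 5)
Your overall route is the paper's: Lemma~\ref{prop:eta} to control the shape of the permutation, then Hammack's cancellation law on the off-diagonal block to get that $B$ is PET, and the diagonal blocks to get $G_A\cong G_D$. But there is one genuine gap at the very first step. The conclusion of Lemma~\ref{prop:eta} is that the isomorphism \emph{respects} the canonical partitions, and by the paper's definition this means $f(X_1)=X_2$ \emph{or} $f(X_1)=Y_2$; it does not by itself force the block-diagonal form $P=\mathcal{I}(P_1,P_2)$. Your ``so its associated permutation matrix has the block-diagonal form'' is therefore a non sequitur: you must also dispose of the anti-diagonal possibility $P=\mathcal{P}(P_1,P_2)$. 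The paper treats this as a separate case: there the off-diagonal identity becomes $P_2^T(V^T\otimes B^T)P_1=V\otimes B^T$, cancellation forces $V$ to be PET, and this is impossible because $k\neq l$ makes the row-sum set of $V$ differ from its column-sum set. (Alternatively, a size count kills the case: $f(X_1)=Y_2$ would force $mp=np$, hence $m=n$, while $ml=nk$ then gives $k=l$.) Either way, the case has to be addressed explicitly.

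The remainder is sound and in one place more careful than the paper. Your observation that matching block sizes forces $p=q$, so that ``PET'' is meaningful for $B$, is a fair point the paper leaves implicit. For $G_A\cong G_D$ the paper simply asserts that the diagonal identities give the isomorphism; you correctly note that cancellation alone would only yield permutational equivalence of $A$ and $D$, and your component-counting argument --- $m$ disjoint copies of $G_A$ isomorphic to $m$ disjoint copies of $G_D$ implies $G_A\cong G_D$ by uniqueness of the multiset of connected components --- is a clean way to extract the needed permutational similarity. The application of Theorem~\ref{cancellation} to $P_1^T(V\otimes B)P_2=V\otimes B^T$ with $C=V$ is exactly the paper's step and is correctly justified ($V\neq 0$ because $k\neq l$, together with the standing hypothesis that $B$ has no simultaneous zero row and zero column).
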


\begin{proof} Let the graphs $G_{L\underline{\otimes} H}$ and $G_{L\underline{\otimes} H^{\#}}$ be isomorphic, then by the Lemma \ref{prop:eta} there exists an isomorphism between $G_{L\underline{\otimes} H}$ and $G_{L\underline{\otimes} H^{\#}}$ such that the corresponding permutation matrix $P$ satisfies $P^T(L\underline{\otimes}H)P= L\underline{\otimes} H^{\#}$, and $P$ is either of the form $\mathcal{I}(P_1,P_2)$ or $\mathcal{P}(P_1,P_2)$ for some permutation matrices $P_1$ and $P_2$. 
	
	Case 1: Suppose $ \mathcal{P} = \mathcal{I}(P_1,P_2)$. Then, we have $P_1^T(I_m\otimes A)P_1=I_m\otimes D$, $P_1^T(V\otimes B)P_2=V\otimes B^T$, and $P_2^T(I_n\otimes D)P_2=I_n\otimes A$. Using Hammack's cancellation law, the second equality implies that $B$ is PET. The other equalities imply that the graphs $G_{A}$ and $G_{D}$ are isomorphic.
	
	Case 2: Suppose $ \mathcal{P} = \mathcal{P}(P_1,P_2)$. Then, we have $P_1^T(I_m\otimes A)P_1=I_n\otimes A$, $P_2^T(V^T\otimes B^T)P_1=V\otimes B^T$ and $P_2^T(I_n\otimes D)P_2=I_m\otimes D$. Using Hammack's cancellation law, the second equality implies that $V$ is PET. But as $V$ has distinct row and column sums, this case cannot occur.
\end{proof}

For constructing cospectral nonisomorphic graphs, the condition that either $B$ is non-PET or $G_A$ and $G_D$ are nonisomorphic is sufficient. 
{The next example illustrates the construction.}
\begin{ex}
	Let $m=1$, $V=j_n^T$ where $j_n$ is the all-one vector of length $n>1$.  Then, $$L\underline{\otimes} H=\begin{bmatrix}A&B&B&\ldots&B\\B^T&D&0&\ldots&0\\B^T&0&D&\ldots&0\\\vdots&\vdots&\vdots&\ddots&\vdots\\B^T&0&0&\cdots&D\end{bmatrix}, \;L\underline{\otimes} H^{\#}=\begin{bmatrix}D&B^T&B^T&\ldots&B^T\\B&A&0&\ldots&0\\B&0&A&\ldots&0\\\vdots&\vdots&\vdots&\ddots&\vdots\\B&0&0&\cdots&A\end{bmatrix}. $$ 
	Let $A$ and $D$ be cospectral. Then, by Theorem \ref{gm-cospec}, the graphs $G_{L\underline{\otimes}H}$ and $G_{L\underline{\otimes}H^{\#}}$ are cospectral. Let $B=\begin{bmatrix}1&0\\1&0\end{bmatrix}$, and $A=D=\begin{bmatrix}0&1\\1&0\end{bmatrix}$. As $B$ is a non-PET matrix, by Theorem \ref{ref-bip-necc-cond}, the graphs $G_{L\underline{\otimes} H}$ and $G_{L\underline{\otimes} H^{\#}}$ nonisomorphic. See Figure \ref{i-b-ex} for the case $n=2$. 
	
	\begin{figure}[h!]
		\centering
		\begin{tabular}{ll}
			\includegraphics[scale=0.5]{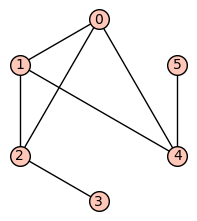}
			\hfill
			\includegraphics[scale=0.5]{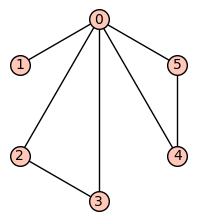}
		\end{tabular}
		\caption{Unfoldings of a graph with respect to a reflexive bipartite graph}
		\label{i-b-ex}
	\end{figure}

\end{ex}

\section{Construction II - Unfoldings involving a semi reflexive bipartite graph}\label{Semi_reflex_unfold} 

This section describes a cospectral construction for simple graphs by unfolding a semi-reflexive bipartite graph. Let $B$ and $U$ be matrices of size $p\times p$ and $m\times n$ respectively, and entries of $B$ and $U$ are either $0$ or $1$. Let $I_p$ be an identity matrix of order $p$, and $X$ be the adjacency matrix of a simple graph on $n$ vertices without any loops. Let $M=[M_{ij}]$ be a block matrix such that each $M_{ij}$ is a square matrix. The partial transpose of $M$, denoted by $M^{\tau}$, is defined as $M^{\tau} = [M_{ij}^T]$. 

Define $L=\begin{bmatrix}0&U\\U^T&X\end{bmatrix}$,  $H=\begin{bmatrix}0&B\\B^T&I_p\end{bmatrix}$, and  $H^{\tau}=\begin{bmatrix}0&B^T\\B&I_p\end{bmatrix}$. Consider the partitioned tensor products $$L\underline{\otimes} H=\begin{bmatrix}0&U\otimes  B\\U^T\otimes  B^T&X\otimes  I_p\end{bmatrix} ~\mbox{and}~ \;L\underline{\otimes}  H^{\tau}=\begin{bmatrix}0&U\otimes  B^T\\U^T\otimes  B&X\otimes  I_p\end{bmatrix}.$$ We call $G_{L\underline{\otimes} H}$ and $G_{L\underline{\otimes} H^{\tau}}$ the graphs obtained by unfolding the semi reflexive bipartite graph $G_H$ with respect to $G_L$. Since the graph $G_X$ does not have any loop, so $G_{L\underline{\otimes} H}$ and $G_{L\underline{\otimes} H^{\tau}}$ also do not have any loops. The vertex partition induced naturally by the partitioned tensor product on the graph $G_{L\underline{\otimes} H}$ is defined to be the canonical vertex partition of $G_{L\underline{\otimes} H}$.

\begin{thm}\label{thm-unf-semi-ref-cospec}
	The graphs $G_{L\underline{\otimes}H}$ and $G_{L\underline{\otimes}H^{\tau}}$ are cospectral for the adjacency matrix.  
\end{thm}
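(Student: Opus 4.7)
The plan is to produce an orthogonal similarity between $L\underline{\otimes} H$ and $L\underline{\otimes} H^{\tau}$, mirroring the strategy used in Lemma~\ref{thm-unf-ref-cospec}. The key observation is that $B$ is a square $p\times p$ matrix, so by the singular value decomposition there exist orthogonal matrices $R_1$ and $R_2$ of order $p$ with $R_1^T B R_2 = B^T$ (equivalently, $R_2^T B^T R_1 = B$).

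Now set $R = \mathcal{I}(R_1,R_2)$, which is an orthogonal diagonal block matrix. A direct $2\times 2$ block calculation gives
$$ R^T H R = \begin{bmatrix} R_1^T & 0 \\ 0 & R_2^T \end{bmatrix}\begin{bmatrix} 0 & B \\ B^T & I_p \end{bmatrix}\begin{bmatrix} R_1 & 0 \\ 0 & R_2 \end{bmatrix} = \begin{bmatrix} 0 & R_1^T B R_2 \\ R_2^T B^T R_1 & R_2^T R_2 \end{bmatrix} = \begin{bmatrix} 0 & B^T \\ B & I_p \end{bmatrix} = H^{\tau}. $$
So $H$ and $H^{\tau}$ are orthogonally similar via a similarity that preserves the $2\times 2$ block structure.

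Next, let $Q = \mathcal{I}(I_m, I_n) = I_{m+n}$ and define $S = Q \underline{\otimes} R = \mathcal{I}(I_m \otimes R_1,\, I_n \otimes R_2)$. Then $S$ is orthogonal, since each diagonal block is a Kronecker product of orthogonal matrices. Because $Q$ and $R$ are both of the diagonal-block form $\mathcal{I}(\cdot,\cdot)$, Proposition~\ref{parti-diag} applies and yields
$$ S^T (L \underline{\otimes} H) S = (Q^T L Q) \underline{\otimes} (R^T H R) = L \underline{\otimes} H^{\tau}, $$
where we use $Q^T L Q = L$ (trivially, as $Q = I_{m+n}$) and the computation of $R^T H R$ above. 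Therefore $L \underline{\otimes} H$ and $L \underline{\otimes} H^{\tau}$ are orthogonally similar, hence cospectral.

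There is no serious obstacle here. The only minor technical point is that Proposition~\ref{parti-diag} is stated for left multiplication by $Q \underline{\otimes} R$, so one should either note the obvious right-multiplication analogue (which is verified by the same block computation) or split the conjugation $S^T(L\underline{\otimes} H)S$ into two steps and carry out the right multiplication by $S$ explicitly, checking that the result agrees block-by-block with $L \underline{\otimes} H^{\tau}$. Note that, unlike Lemma~\ref{thm-unf-ref-cospec}, the matrix $U$ need not be square here, since the orthogonal equivalence is carried entirely by the $H$-side via $R$.
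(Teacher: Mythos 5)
Your proof is correct and follows essentially the same route as the paper: both obtain orthogonal $R_1,R_2$ with $R_1^TBR_2=B^T$ via the orthogonal equivalence of a square matrix to its transpose, conjugate $H$ to $H^{\tau}$ by $\mathcal{I}(R_1,R_2)$, and combine with the identity block on the $L$-side through Proposition~\ref{parti-diag}. The only difference is cosmetic (you swap the roles of the letters $Q$ and $R$ and write out the block computation the paper leaves implicit).
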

\begin{proof}
	Since $B$ is a square matrix, there exist two orthogonal matrices $Q_1$ and $Q_2$ such that $Q_1^TBQ_2=B^T$. Define $Q=\mathcal{I}(Q_1,Q_2)$. Then $Q$ is orthogonal and $Q^THQ=H^{\tau}$. 
	Let $R=\mathcal{I}(I_m,I_n)$, then $R$ satisfies $R^TLR=L$. Define $P=R\underline{\otimes} Q$. Then $P$ is an orthogonal matrix, and, by Proposition \ref{parti-diag},  $P^{T}(L\underline{\otimes} H)P=L\underline{\otimes} H^{\tau}$. Thus, $G_{L\underline{\otimes}H}$ and $G_{L\underline{\otimes}H^{\tau}}$ are cospectral. 
\end{proof}

Let $G_L\backslash G_X$ denote the bipartite graph obtained by removing all the edges in the induced subgraph $G_X$. Next, we prove a lemma, which helps us give an equivalent condition for the isomorphism of the  cospectral graphs constructed in the above theorem.

\begin{lem} \label{eta3-char}
	Let $G_X$ be a graph on $n$ vertices, and $G_L\backslash G_X$ be a $(k,l)$-biregular bipartite graph. Suppose one of the following holds: 
	\begin{enumerate}
		\item  $l\leq k$, and $G_X$ has no isolated vertices.
		\item $l>k$ and $G_X$ has maximum degree $l-k-1$. 
	\end{enumerate}
	If the graphs $G_{L\underline{\otimes}H}$ and $G_{L\underline{\otimes}H^{\tau}}$ are isomorphic, then the isomorphism respects the canonical vertex partitions of $G_{L\underline{\otimes}H}$ and $G_{L\underline{\otimes}H^{\tau}}$.
\end{lem}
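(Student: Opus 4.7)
The plan is to argue by contradiction, in the spirit of the proof of Lemma~\ref{prop:eta}. Let $\Gamma_1 = G_{L\underline{\otimes}H}$ and $\Gamma_2 = G_{L\underline{\otimes}H^\tau}$, with canonical partitions $V(\Gamma_1) = X_1 \cup Y_1$ and $V(\Gamma_2) = X_2 \cup Y_2$. Assume an isomorphism $f\colon \Gamma_1 \to \Gamma_2$ does not respect the partition; then, since $|X_1| = |X_2| = mp$ and $|Y_1| = |Y_2| = np$, there must exist some $v \in X_1$ with $f(v) \in Y_2$. Writing $v = (i,\alpha)$ and $f(v) = (j,\beta)$, reading off the block form of $L \underline{\otimes} H$ gives $\deg_{\Gamma_1}(v) = k\,b_\alpha$ and $\deg_{\Gamma_2}(f(v)) = l\,b_\beta + x_j$, where $b_\alpha$ (resp.\ $b'_\alpha$) are the row (resp.\ column) sums of $B$, and $x_j$ is the $G_X$-degree of $j$. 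Preservation of degree thus forces the identity $k\,b_\alpha = l\,b_\beta + x_j$.

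For Case 2 ($l > k$ and $\Delta(G_X) \leq l-k-1$), the argument is an almost verbatim transcription of Lemma~\ref{prop:eta} with the roles of $X$ and $Y$ interchanged. The maximum degree in $Y_1$ equals $l B'_{\max} + X_{\max}$, while the maximum possible degree in $X_2$ is $k B'_{\max}$. Excluding the trivial case $B=0$ (so $B'_{\max} \geq 1$), the inequality $l > k$ forces $l B'_{\max} + X_{\max} > k B'_{\max}$, and hence the image under $f$ of any maximum-degree vertex of $Y_1$ must lie in $Y_2$. Peeling off those vertices from both graphs and iterating on the resulting induced subgraphs, exactly as in Lemma~\ref{prop:eta}, yields $f(Y_1)=Y_2$ and therefore $f(X_1)=X_2$. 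The hypothesis $\Delta(G_X)\le l-k-1$ keeps the inequality strict at each step of the induction.

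For Case 1 ($l \leq k$ and $\delta(G_X) \geq 1$), the naive max-degree comparison is inconclusive, so the contradiction must be extracted from a neighborhood-based invariant. The crucial observation is that $N_{\Gamma_1}(v) \subseteq Y_1$, viewed as an induced subgraph, decomposes as $b_\alpha$ disjoint copies of $G_X[N_U(i)]$ (because $Y_1$ is the disjoint union of $p$ parallel copies of $G_X$ in the canonical partition), whereas $N_{\Gamma_2}(f(v)) = N_X \sqcup N_Y$ with $N_X \subseteq X_2$ independent of size $l b_\beta$ and $N_Y \subseteq Y_2$ induced-isomorphic to $G_X[N_X(j)]$ of size $x_j \geq 1$, joined to $N_X$ through the biadjacency matrix $U$. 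Matching the multisets of neighbor-degrees on both sides --- each value $k b'_{\alpha'}$ from the $X_2$-part of $N_{\Gamma_2}(f(v))$ occurs with multiplicity $l$, while the values $l b'_\beta + x_j$ produced by $N_{\Gamma_1}(v)$ have multiplicities dictated by coincidences in the pairs $(j,\beta)$ --- together with $l \leq k$ and $\delta(G_X) \geq 1$, produces a contradiction. A peeling over degree classes, as in Lemma~\ref{prop:eta}, then propagates the conclusion.

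The main obstacle is Case 1: the first-order degree data used in Case 2 is not by itself strong enough to separate $X_1$-vertices from $Y_2$-vertices, so one must carry out the neighborhood-multiset bookkeeping carefully, tracking the $l$-fold multiplicities contributed by the $X_2$-side of $N(f(v))$ and the ``$Y$-like'' $G_X$-induced piece forced by $\delta(G_X) \geq 1$. Once the contradiction is secured for a well-chosen extremal $v$, the peeling step goes through routinely.
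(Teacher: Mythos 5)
There is a genuine gap, concentrated in Case 1 but also affecting Case 2. For Case 1 you declare that ``first-order degree data is not strong enough'' and replace it with a neighborhood-multiset argument that you never actually carry out --- the claimed contradiction is asserted, not derived, and it is far from clear that the bookkeeping you describe closes. In fact first-order degree data \emph{does} suffice: the idea you missed is to track the \emph{minimum}-degree vertex of $X_1$ rather than the maximum-degree one. With the paper's normalization (row sums of $U$ equal to $l$, column sums equal to $k$), a minimum-degree vertex $x\in X_1$ has degree $l\,b_{\min}$, where $b_{\min}$ is the smallest row sum of $B$; if $f(x)\in Y_2$ its degree is $k\,b_j+a_s$ for some row sum $b_j\ge b_{\min}$ and some $G_X$-degree $a_s\ge 1$, whence $l\,b_{\min}=k\,b_j+a_s\ge l\,b_{\min}+a_s>l\,b_{\min}$, a contradiction. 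This is the entire content of Case 1 in the paper, followed by the same peeling as in Lemma~\ref{prop:eta}.

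Your Case 2 also does not go through as sketched, because you peel from the wrong side. The two sides of this construction are not symmetric: degrees on the $X$-side depend on a single index (a row sum of $B$), while degrees on the $Y$-side depend on two indices (a column sum of $B$ \emph{and} a $G_X$-degree). Consequently the top degree class of $Y_1$ is a product set $\{(j,\beta): x_j=X_{\max},\, b'_\beta=B'_{\max}\}$, deleting it does not produce an induced subgraph of the same unfolding form, and the full degree-$d$ class of $\Gamma_1$ for $d=lB'_{\max}+X_{\max}$ may also contain $X_1$-vertices whose images you have not located. Moreover your first step uses only $l>k$ and $B\neq 0$, and the vague remark that $\Delta(G_X)\le l-k-1$ ``keeps the inequality strict'' does not reflect how that hypothesis is actually needed: in the paper one takes a maximum-degree vertex of $X_1$, of degree $lB_{\max}$, supposes it maps into $Y_2$, and gets $(l-k)B_{\max}\le a_s\le l-k-1$, forcing $B_{\max}=0$ and hence $B=0$, which is then handled as a degenerate case. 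The correct strategy in both cases is to run the extremal-vertex and peeling argument entirely on the $X$-side, removing whole row classes of $B$ (and the corresponding column classes of $B$ on the other graph) so that the reduced graphs remain unfoldings with a smaller $B$, terminating at $G_{X\otimes I}$.
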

\begin{proof}
	Let the graphs $\Gamma_1=G_{L\underline{\otimes}H}$ and $\Gamma_2=G_{L\underline{\otimes}H^{\tau}}$ be isomorphic and let $f$ be an isomorphism from $\Gamma_1$ to $\Gamma_2$. Let $V(\Gamma_i)=X_i\cup Y_i$ be the canonical vertex partitioning of the graphs $\Gamma_i$ for $i=1,2$. Let $b_i$ and $b_i'$ denote the $i^{th}$ row sum of the matrices $B$ and $B^T$, respectively. Let the degree sequence of the graph $G_X$ be  $a_1\geq a_2\geq\cdots\geq a_n \geq 0$.  
	
  Suppose $l\leq k$, and $a_n >0$. Then, let $x \in X_1$ be a vertex of minimum degreein $X_1$. Then, $f(x) \in X_2$. For, suppose that $f(x)\in Y_2$. Then, $d_{\Gamma_1}(x)=lb_i$ for some $1\leq i\leq p$ and $d_{\Gamma_2}(f(x))=kb_j+a_s$, $1\leq j\leq p$, $1 \leq s \leq n$. Since the isomorphism preserves the degrees, we have $lb_i=kb_j+a_s$. But this is a contradiction as $b_i \leq b_j$, $l\leq k$ and $a_s > 0$. Hence $f(x)\in X_2$. Let $x_1,\ldots, x_{rm}$ be the vertices in $X_1$ with the same minimum degree such that $d_{\Gamma_1}(x_{1+(s-1)m})=\ldots=d_{\Gamma_1}(x_{m+(s-1)m})=lb_{i_s}$ for $s\in \{1,2,\ldots, r\}$ where $b_{i_1}=\ldots=b_{i_r}$ for $1\leq i_1,\ldots, i_r\leq p$. Then, using the previous argument, for the vertices  $f(x_1),\ldots, f(x_{rm})$ we have $d_{\Gamma_2}(f(x_{1+(s-1)m}))=\ldots=d_{\Gamma_2}(f(x_{m+(s-1)m}))=lb'_{j_s}$ for $s\in \{1,2,\ldots, r\}$ where $b'_{j_1}=\ldots=b'_{j_r}$ for $1\leq j_1,\ldots, j_r\leq p$. Define $B'$ and $B''$ to be the matrices obtained by removing $i_s^{th}$ row and $j_s^{th}$ column respectively from $B$ for all $s\in \{1,2,\ldots, r\}$. Define $\Gamma_1'$ and $\Gamma_2'$ to be the induced graphs corresponding to the adjacency matrices $$\begin{bmatrix}0&U\otimes  B'\\U^T\otimes  B'^T&X\otimes  I\end{bmatrix}\; \text{and} \begin{bmatrix}0&U\otimes  B''^T\\U^T\otimes  B''&X\otimes  I\end{bmatrix},$$ respectively. Note that, the sizes of the matrices $B'$ and $B''$ are $(p-r)\times p$ and $p\times (p-r)$, respectively. Now, $\Gamma_1'$ and $\Gamma_2'$ are isomorphic as well, apply the same argument for $\Gamma_1'$ and $\Gamma_2'$ until both the graphs reduce to $G_{X\otimes I}$. Thus $f(X_1)=X_2$, and hence $f(Y_1)=Y_2$. 
	
 Suppose $l>k$ and $l-k>a_1$. Then, let $x \in X_1$ be a vertex of maximum degree in $X_1$. Then,  $f(x) \in X_2$. For,  suppose  that $f(x)\in Y_2$. Then, $d_{\Gamma_1}(x)=lb_i$ for some $1\leq i\leq p$ and $d_{\Gamma_2}(f(x))=kb_j+a_s$, $1\leq j\leq p$, $1 \leq s \leq n$. Since the isomorphism preserves the degrees, we have $lb_i=kb_j+a_s$. Since $x$ has maximum degree in $X_1$, we have $b_j\leq b_i$. This gives us $(l-k)b_i\leq a_s$, that is, $b_i \leq \frac{a_s}{l-k}$. Now, $a_s$ is the degree of a vertex in $G_X$, and we know that $\frac{a_s}{l-k}<1$. This implies that $b_i<1$. So $b_i=0$ and $B$ must be a zero matrix. We can choose $f(x)\in X_2$. Let $x_1,\ldots, x_{rm}$ be the vertices in $X_1$ with the same maximum degree:  $d_{\Gamma_1}(x_{1+(s-1)m})=\ldots=d_{\Gamma_1}(x_{m+(s-1)m})=lb_{i_s}$ for $s\in \{1,2,\ldots, r\}$ where $b_{i_1}=\ldots=b_{i_r}$ for $1\leq i_1,\ldots, i_r\leq p$. Then, using the previous argument, for the vertices $f(x_1),\ldots, f(x_{rm})$ we have $d_{\Gamma_2}(f(x_{1+(s-1)m}))=\ldots=d_{\Gamma_2}(f(x_{m+(s-1)m}))=lb'_{j_s}$ for $s\in \{1,2,\ldots, r\}$ where $b'_{j_1}=\ldots=b'_{j_r}$ for $1\leq j_1,\ldots, j_r\leq p$. Define $B'$ and $B''$ to be the matrices obtained by removing $i_s^{th}$ row and $j_s^{th}$ column respectively from $B$ for all $s\in \{1,2,\ldots, r\}$. Define $\Gamma_1'$ and $\Gamma_2'$ to be the induced graphs corresponding to the adjacency matrices $$\begin{bmatrix}0&U\otimes  B'\\U^T\otimes  B'^T&X\otimes  I\end{bmatrix}\; \text{and} \begin{bmatrix}0&U\otimes  B''^T\\U^T\otimes  B''&X\otimes  I\end{bmatrix},$$ respectively. Note that, the sizes of the matrices $B'$ and $B''$ are $(p-r)\times p$ and $p\times (p-r)$, respectively. Now, $\Gamma_1'$ and $\Gamma_2'$ are isomorphic as well, apply the same argument for $\Gamma_1'$ and $\Gamma_2'$ until both the graphs reduce to $G_{X\otimes I}$. Thus $f(X_1)=X_2$ and hence $f(Y_1)=Y_2$. 
\end{proof}

In the following theorem, we characterize the isomorphism of constructed cospectral graphs by assuming that the matrix $B$ cannot have both a zero row and a zero column and using Hammack's Cancellation Law.

\begin{thm}\label{thm-unf-ref-bip}
	Let $G_X$ be a graph on $n$ vertices, and $G_L\backslash G_X$ be a $(k,l)$-biregular bipartite graph. Suppose one of the following holds: 
	\begin{enumerate}
		\item  $l\leq k$, and $G_X$ has no isolated vertices.
		\item $l>k$ and $G_X$ has maximum degree $l-k-1$. \footnote{Since this construction requires that $G_X$ has at least one edge for it to be non-trivial, we can assume that $l-k-1>0$, that is, $l>k+1$.}
	\end{enumerate}
	Then, the graphs $G_{L\underline{\otimes} H}$ and $G_{L\underline{\otimes} H^{\tau}}$ are isomorphic if and only if $B$ is PET.
\end{thm}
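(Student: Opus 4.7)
The approach is to reduce the biconditional to the single block equation $P_1^T (U \otimes B) P_2 = U \otimes B^T$ and then peel off $U$ via Hammack's cancellation law.

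For sufficiency, assume $B$ is PET, so there exist permutation matrices $P_1, P_2$ with $P_1^T B P_2 = B^T$. Setting $Q = \mathcal{I}(P_1, P_2)$, a direct $2\times 2$ block computation gives $Q^T H Q = H^{\tau}$, since the $(1,2)$ block becomes $P_1^T B P_2 = B^T$ and the $(2,2)$ block becomes $P_2^T I_p P_2 = I_p$. Taking $R = \mathcal{I}(I_m, I_n)$, we trivially have $R^T L R = L$. Since both $R$ and $Q$ are of $\mathcal{I}$-form, Proposition \ref{parti-diag} applies to $P := R \underline{\otimes} Q$ and yields $P^T(L \underline{\otimes} H)P = (R^T L R) \underline{\otimes} (Q^T H Q) = L \underline{\otimes} H^{\tau}$, with $P$ a permutation matrix. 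This mirrors the cospectrality argument in Theorem \ref{thm-unf-semi-ref-cospec} but uses permutations in place of arbitrary orthogonals.

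For necessity, suppose $G_{L\underline{\otimes}H} \cong G_{L\underline{\otimes}H^{\tau}}$. Lemma \ref{eta3-char} applies under either hypothesis (1) or (2) and guarantees that any isomorphism between them sends $X_1$ onto $X_2$. Hence the associated permutation matrix has the block-diagonal form $P = \mathcal{I}(P_1, P_2)$ and satisfies $P^T(L\underline{\otimes}H)P = L\underline{\otimes}H^{\tau}$. Reading off the upper-right block gives $P_1^T(U \otimes B) P_2 = U \otimes B^T$, so $U \otimes B$ and $U \otimes B^T$ are permutationally equivalent. Since $G_L \setminus G_X$ is biregular with $k, l \geq 1$, the matrix $U$ is nonzero; and $B$ is a square $(0,1)$-matrix with the standing no-zero-row-or-column hypothesis. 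Theorem \ref{cancellation} then cancels $U$ and yields that $B$ and $B^T$ are permutationally equivalent, i.e., $B$ is PET.

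The main delicate step has already been done in Lemma \ref{eta3-char}: the constraints on the biregularity parameters $(k,l)$ and on the degree sequence of $G_X$ are exactly what is needed to exclude the possibility that an isomorphism swaps the two canonical vertex classes, thereby forcing the permutation matrix into $\mathcal{I}$-form. Once that reduction is in hand, the remainder is a block calculation together with a single appeal to Hammack's cancellation law; no further case analysis is required.
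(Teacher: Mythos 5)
Your proposal is correct and follows essentially the same route as the paper's own proof: sufficiency by building the block-diagonal permutation $P=R\,\underline{\otimes}\,Q$ with $Q=\mathcal{I}(P_1,P_2)$, and necessity by invoking Lemma \ref{eta3-char} to force the isomorphism into $\mathcal{I}$-form and then cancelling $U$ from $P_1^T(U\otimes B)P_2=U\otimes B^T$ via Hammack's law. No substantive differences.
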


\begin{proof}
	If $B$ is a PET matrix, then there exist two permutation matrices $Q_1$ and $Q_2$ such such that $Q_1^TBQ_2=B^T$. Define the permutation matrices $Q=\mathcal{I}(Q_1,Q_2)$ and $R=\mathcal{I}(I_m,I_n)$, then $Q^THQ=H^{\tau}$ and $R^TLR=L$. Now define $P=R\underline{\otimes}Q$, then $P^T(L\underline{\otimes}H)P=L\underline{\otimes}H^{\tau}$. Thus, the graphs $G_{L\underline{\otimes} H}$ and $G_{L\underline{\otimes} H^{\tau}}$ are isomorphic. 
	
	Conversely, suppose the graphs $G_{L\underline{\otimes} H}$ and $G_{L\underline{\otimes} H^{\tau}}$ are isomorphic. {Then, using Lemma \ref{eta3-char}}, there exists an isomorphism between $G_{L\underline{\otimes} H}$ and $G_{L\underline{\otimes} H^{\tau}}$ such that the corresponding permutation matrix that satisfies $P^T(L\underline{\otimes}H)P=L\underline{\otimes}H^{\tau}$ has the form $P=\mathcal{I}(P_1,P_2)$. Then, $P_1^T(U\otimes B)P_2=U\otimes B^T$ and $P_2^T(X\otimes I)P_2=X\otimes I$. Using Hammack's Cancellation Law $P_1^T(U\otimes B)P_2=U\otimes B^T$ implies that there exists two permutation matrices $R_1$ and $R_2$ such that $R_1^TBR_2=B^T$ and $B$ is PET. 
\end{proof}

We now demonstrate Theorem \ref{thm-unf-ref-bip} using examples. 

\begin{ex}
	
	Let $U=J_2$, $X=J_2-I_2$ and $B=\begin{bmatrix}1&1\\0&0\end{bmatrix}$. Then, $m=n=k=l=2$, and $B$ is a non-PET matrix. The constructed cospectral graphs $G_{L\underline{\otimes} H}$ and $G_{L\underline{\otimes} H^{\tau}}$ are nonisomorphic by the part (1) of Theorem \ref{thm-unf-ref-bip}. Figure \ref{ex-unf-semiref-bip1}. shows the unfoldings of a semi-reflexive bipartite graph corresponding to $B$ and given by the adjacency matrices 
	$$\begin{bmatrix}0&0&B&B\\0&0&B&B\\B^T&B^T&0&I\\B^T&B^T&I&0\end{bmatrix} \text{and}\begin{bmatrix}0&0&B^T&B^T\\0&0&B^T&B^T\\B&B&0&I\\B&B&I&0\end{bmatrix}. $$
	
	\begin{figure}[h!]
		\centering
		\includegraphics[scale=0.5]{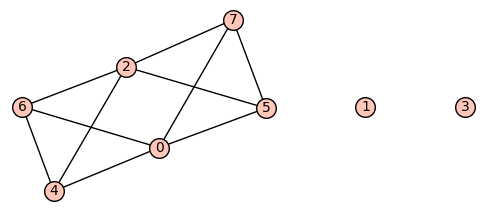}  \\
		\includegraphics[scale=0.5]{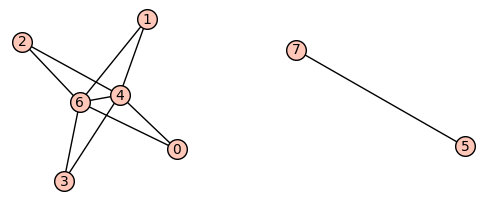}
		\caption{Cospectral nonisomorphic unfoldings of a semi-reflexive bipartite graph}
		\label{ex-unf-semiref-bip1}
	\end{figure}
	
\end{ex}

\begin{ex}
	Let $U=j_n^T$ be the all-one vector of length $n>1$ and $X$ be the adjacency matrix of a complete graph on $n$ vertices, that is, $X=J_n-I_n$. Then, $$L\underline{\otimes} H=\begin{bmatrix}0&B&B&\ldots&B\\B^T&0&I&\ldots&I\\B^T&I&0&\ldots&I\\\vdots&\vdots&\vdots&\ddots&\vdots\\B^T&I&I&\cdots&0\end{bmatrix}, \;L\underline{\otimes} H^{\tau}=\begin{bmatrix}0&B^T&B^T&\ldots&B^T\\B&0&I&\ldots&I\\B&I&0&\ldots&I\\\vdots&\vdots&\vdots&\ddots&\vdots\\B&I&I&\cdots&0\end{bmatrix}. $$ 
	Using Theorem \ref{thm-unf-semi-ref-cospec}, the graphs $G_{L\underline{\otimes}H}$ and $G_{L\underline{\otimes}H^{\tau}}$ are cospectral. This particular case is exactly the Construction III described by Kannan and Pragada in \cite{kannan-pragada}. 
	
	
	Let $U=j_3^T$, $X=\begin{bmatrix}0&1&0\\1&0&0\\0&0&0\end{bmatrix}$ and $B=\begin{bmatrix}1&1\\0&0\end{bmatrix}$. Then $k=m=1$, $l=n=3$, and $B$ is non-PET. The constructed cospectral graphs $G_{L\underline{\otimes} H}$ and $G_{L\underline{\otimes} H^{\tau}}$ are nonisomorphic, by  part (2) of Theorem \ref{thm-unf-ref-bip}. Figure \ref{ex-unf-semiref-bip} shows that the unfoldings of a semi-reflexive bipartite graph corresponding to $B$ and given by the adjacency matrices  
	$$\begin{bmatrix}0&B&B&B\\B^T&0&I&0\\B^T&I&0&0\\B^T&0&0&0\end{bmatrix} \text{and}\begin{bmatrix}0&B^T&B^T&B^T\\B&0&I&0\\B&I&0&0\\B&0&0&0\end{bmatrix}. $$
	\begin{figure}[H]
		\centering
		\includegraphics[scale=0.4]{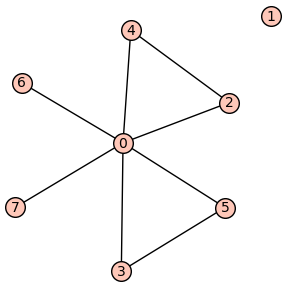}
		\\
		\includegraphics[scale=0.4]{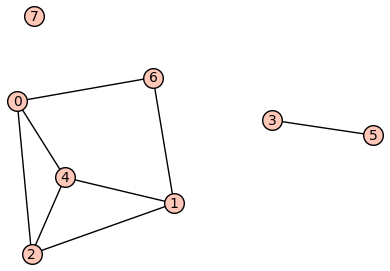} 
		\caption{Cospectral nonisomorphic unfoldings of a semi-reflexive bipartite graph}
		\label{ex-unf-semiref-bip}
	\end{figure}

	
	
\end{ex}

\section{Construction III - Unfoldings involving multipartite graphs}\label{Multipart_unfold}

In this section, we use the unfolding of multipartite graphs to construct cospectral graphs. Let us consider the matrices $$A=\begin{bmatrix} 0&B&B\\  B^T&0&B\\ B^T&B^T&0 \end{bmatrix}, A^{\tau}=\begin{bmatrix}0&B^T&B^T\\ B&0&B^T\\  B&B&0 \end{bmatrix}.$$ 

Define $L(p,q,r)=\begin{bmatrix} 0&J_{p,q}&J_{p,r}\\ J_{q,p}&0&J_{q,r}\\ J_{r,p}&J_{r,q}&0 \end{bmatrix},$ where $B$ is a $(0,1)$-matrix of order $n$, and $J_{m,n}$ is the all-one matrix of size $m\times n$. Now consider the partitioned tensor products
$$ L(p,q,r)\underline{\otimes}A=\begin{bmatrix} 0&J_{p,q}\otimes B&J_{p,r}\otimes B\\ J_{q,p}\otimes B^T&0&J_{q,r}\otimes B\\ J_{r,p}\otimes B^T&J_{r,q}\otimes B^T&0 \end{bmatrix}$$ and $$ L(p,q,r)\underline{\otimes}A^{\tau}=\begin{bmatrix} 0&J_{p,q}\otimes B^T&J_{p,r}\otimes B^T\\ J_{q,p}\otimes B&0&J_{q,r}\otimes B^T\\ J_{r,p}\otimes B&J_{r,q}\otimes B&0 \end{bmatrix}.$$

We say that the graphs $G_{L(p,q,r)\underline{\otimes}  A}$ and $G_{L(p,q,r)\underline{\otimes}  A^{\tau}}$ are unfoldings of the tripartite graph $G_A$ with respect to $G_{L(p,q,r)}$. A matrix $M$ is \emph{involutory} if $M^2 = I.$ By a result of Dutta and Adhikari \cite[Theorem 7]{dutta}, there exists a nonsingular matrix $Q$ such that $Q^{-1}AQ=A^{\tau}$ if $B$ is similar to its transpose by either an orthogonal matrix or a nonsingular involutory matrix. Next, we use this result involving partial transpose operation to give a new cospectral construction. 


\begin{lem}
	The graphs $G_{L(p,q,r)\underline{\otimes}  A}$ and $G_{L(p,q,r)\underline{\otimes}  A^{\tau}}$ are cospectral if $B$ is either orthogonally or (nonsingular) involutory similar to its transpose. 
\end{lem}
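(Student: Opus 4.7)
The strategy is to exhibit an explicit nonsingular matrix that simultaneously conjugates $L(p,q,r)\underline{\otimes}A$ to $L(p,q,r)\underline{\otimes}A^{\tau}$. First I would unpack the hypothesis: in either case there is a nonsingular $n\times n$ matrix $P$ with $P^{-1}BP=B^T$. Moreover, in both cases one automatically gets the reverse identity $P^{-1}B^TP=B$. If $P$ is orthogonal, transpose $P^TBP=B^T$ and use $P^T=P^{-1}$; if $P$ is involutory, then $PB^TP=P(PBP)P=P^2BP^2=B$. This two-sided intertwining is what drives the construction.

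Next I would set $S=\mathrm{diag}\bigl(I_p\otimes P,\,I_q\otimes P,\,I_r\otimes P\bigr)$, viewed as a block-diagonal matrix compatible with the canonical $3\times 3$ block structure (diagonal blocks of sizes $pn$, $qn$, $rn$) of $L(p,q,r)\underline{\otimes}A$. Because $P$ is nonsingular, so is $S$, with the obvious block-diagonal inverse $S^{-1}=\mathrm{diag}\bigl(I_p\otimes P^{-1},\,I_q\otimes P^{-1},\,I_r\otimes P^{-1}\bigr)$.

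Then I would compute $S^{-1}\bigl(L(p,q,r)\underline{\otimes}A\bigr)S$ block by block. By the mixed-product property of the Kronecker product, the $(i,j)$-block equals $L(p,q,r)_{ij}\otimes\bigl(P^{-1}A_{ij}P\bigr)$. Since each $A_{ij}$ lies in $\{0,B,B^T\}$, and the two identities $P^{-1}BP=B^T$ and $P^{-1}B^TP=B$ swap $B$ with $B^T$ (and fix $0$), the conjugated matrix agrees block-by-block with $L(p,q,r)\underline{\otimes}A^{\tau}$. Both matrices are symmetric adjacency matrices, and being similar they have the same characteristic polynomial; hence the two graphs are cospectral.

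The main obstacle, modest as it is, is the derivation of the two-sided intertwining $P^{-1}BP=B^T$, $P^{-1}B^TP=B$ from the one-sided hypothesis: neither a generic nonsingular similarity nor one-sided conjugation would suffice, and it is precisely the orthogonality or involutoriness of $P$ that buys the second identity. The rest is a transparent block computation that sidesteps the need for an explicit $3\times 3$ analogue of Proposition \ref{parti-diag}, because $S$ is block-diagonal. (Alternatively, one could appeal to Dutta--Adhikari's Theorem 7 to obtain some nonsingular $Q$ with $Q^{-1}AQ=A^{\tau}$, but the explicitly structured $S$ above is what interacts correctly with the partitioned tensor product.)
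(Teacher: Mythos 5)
Your proposal is correct and matches the paper's proof: the paper also derives the two-sided intertwining $Q_0^{-1}BQ_0=B^T$, $Q_0^{-1}B^TQ_0=B$ from orthogonality or involutoriness, and then conjugates by $P=\mathcal{I}(I_p,I_q,I_r)\underline{\otimes}\,\mathcal{I}(Q_0,Q_0,Q_0)$, which is exactly your block-diagonal matrix $S=\mathrm{diag}(I_p\otimes Q_0,\,I_q\otimes Q_0,\,I_r\otimes Q_0)$. Your direct block-by-block Kronecker computation is just an inlined version of the paper's appeal to the $3\times 3$ analogue of Proposition \ref{parti-diag}.
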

\begin{proof}
	Let $Q_0^TBQ_0=B^T$. If  $Q_0$ is an orthogonal matrix. Then,  $Q_0^TB^TQ_0=B$, and hence $Q_0^{-1}B^TQ_0=B$. If  $Q_0$ is involutory, then $Q_0^{-1}B^TQ_0=B$. Hence, we have $Q_0^{-1}BQ_0=B^T$ and $Q_0^{-1}B^TQ_0=B$. Consider the block diagonal nonsingular matrices $Q=\mathcal{I}(Q_0,Q_0,Q_0)$ and $R=\mathcal{I}(I_p,I_q,I_r)$. Then, $Q^{-1}AQ=A^{\tau}$ and  $R^{-1}L(p,q,r)R=L(p,q,r)$. Define $P=R\underline{\otimes}Q$, then $P$ is nonsingular and  $P^{-1}(L(p,q,r)\underline{\otimes}A)P=L(p,q,r)\underline{\otimes}A^{\tau}$. Thus, the corresponding graphs are cospectral.
\end{proof}

We now give some necessary and sufficient conditions for the cospectral graphs constructed to be isomorphic.  

\begin{thm}
	The graphs $G_{L(p,q,r)\underline{\otimes} A}$ and $G_{L(p,q,r)\underline{\otimes}  A^{\tau}}$ are isomorphic if either $p=r$ or $B$ is a PST matrix\footnote{Suppose $1\leq p\leq q\leq r$. Then, we can assume $p<r$ so that the constructed cospectral graphs are not isomorphic. This condition implies that $p\leq q <r$ or $p<q\leq r$ holds. }. Let $1\leq p< q< r$ and $p+q<r$. If the graphs $G_{L(p,q,r)\underline{\otimes} A}$ and $G_{L(p,q,r)\underline{\otimes}  A^{\tau}}$ are isomorphic, then $B$ is a PET matrix. 
\end{thm}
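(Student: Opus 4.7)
The plan is to prove the two implications separately. For the forward direction, I would construct explicit permutation matrices in each case. If $p = r$, let $P$ be the $3 \times 3$ block permutation with $I_{pn}$ in positions $(1,3)$ and $(3,1)$ and $I_{qn}$ in position $(2,2)$; writing $P = R \underline{\otimes} Q$, where $R$ swaps the outer parts of the $L(p,q,r)$-partition (possible precisely because $p = r$) and $Q$ is the analogous swap on the tripartite $A$-partition, one verifies $R^T L(p,q,r) R = L(p,q,r)$ and $Q^T A Q = A^{\tau}$; the natural $3 \times 3$ extension of Proposition \ref{parti-diag} then gives $P^T (L(p,q,r) \underline{\otimes} A) P = L(p,q,r) \underline{\otimes} A^{\tau}$. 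If $B$ is PST via $P_0^T B P_0 = B^T$, I would instead take $Q = \mathcal{I}(P_0, P_0, P_0)$ and $R = I_{p+q+r}$; a direct block-by-block check gives $Q^T A Q = A^{\tau}$, and $P = R \underline{\otimes} Q$ realizes the isomorphism by the same extended proposition.

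For the backward direction, suppose a permutation matrix $P$ satisfies $P^T (L(p,q,r) \underline{\otimes} A) P = L(p,q,r) \underline{\otimes} A^{\tau}$. Following the pattern of Theorems \ref{ref-bip-necc-cond} and \ref{thm-unf-ref-bip}, the central step is to show $P$ must have the block-diagonal form $P = \textnormal{diag}(P_1, P_2, P_3)$ with respect to the canonical tripartition of sizes $pn, qn, rn$. The vertex degrees in the three canonical parts of $G_{L(p,q,r) \underline{\otimes} A}$ are of the form $(q+r)\beta_i$, $p\gamma_i + r\beta_i$, and $(p+q)\gamma_i$ respectively, where $\beta_i$ and $\gamma_i$ denote the $i^{th}$ row and column sums of $B$; in $G_{L(p,q,r) \underline{\otimes} A^{\tau}}$ the roles of $\beta$ and $\gamma$ are interchanged. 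The hypotheses $p < q < r$ and $p+q < r$ give pairwise distinct block sizes and the strict ordering $p+q < p+r < q+r$ of the coefficients. I would then run a peeling-by-degree argument in the spirit of Lemmas \ref{prop:eta} and \ref{eta3-char}, but organized simultaneously across the three parts: iteratively match extremal-degree vertices in each part, remove them, and recurse until $P$ is forced to preserve each canonical part. Once $P$ is block-diagonal, the $(1,2)$ off-diagonal of $P^T (L(p,q,r) \underline{\otimes} A) P = L(p,q,r) \underline{\otimes} A^{\tau}$ reads $P_1^T (J_{p,q} \otimes B) P_2 = J_{p,q} \otimes B^T$, and Hammack's cancellation law (Theorem \ref{cancellation}) applied with the nonzero matrix $J_{p,q}$ yields that $B$ is PET.

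The main obstacle will be the partition-respecting step. In contrast to the bipartite setup of Lemmas \ref{prop:eta} and \ref{eta3-char}, where a single inequality $k \neq l$ controls everything, the tripartite setting has three distinct coefficients and three overlapping degree profiles, and the peeling has to rule out a more elaborate set of cross-part matches in $G_{L(p,q,r) \underline{\otimes} A^{\tau}}$. The assumption $p + q < r$ is the essential symmetry-breaking hypothesis: it forces $p+q$ to be the strictly smallest of the three coefficients, so that a maximum-degree vertex in Part 1 of $G_{L(p,q,r) \underline{\otimes} A}$ cannot be sent to Part 3 of $G_{L(p,q,r) \underline{\otimes} A^{\tau}}$, and analogous comparisons (combined with the distinct block sizes coming from $p < q < r$) rule out the remaining cross-part assignments.
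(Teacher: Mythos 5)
Your forward direction is correct and coincides with the paper's: for $B$ PST you conjugate by $\mathcal{I}(P_0,P_0,P_0)$ (tensored with the identity on the $L$-side), and for $p=r$ you conjugate by the anti-diagonal block permutations on both factors; the $3\times 3$ analogue of Proposition \ref{parti-diag} then gives $P^{-1}(L(p,q,r)\underline{\otimes}A)P=L(p,q,r)\underline{\otimes}A^{\tau}$.

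The backward direction has a genuine gap at exactly the point you flag as "the main obstacle." You claim the peeling-by-degree argument forces $P$ to be fully block-diagonal, $P=\mathrm{diag}(P_1,P_2,P_3)$, but the degree comparisons you list only rule out Part~1~$\leftrightarrow$~Part~3 and Part~2~$\leftrightarrow$~Part~3 assignments. They do not rule out a vertex of Part~1 of $G_{L(p,q,r)\underline{\otimes}A}$ (degree $(q+r)\beta_i$) landing in Part~2 of $G_{L(p,q,r)\underline{\otimes}A^{\tau}}$ (degree $p\beta_j+r\gamma_j$): these quantities can coincide (e.g.\ $p=1$, $q=2$, $r=4$, $\beta_i=1$, $\beta_j=2$, $\gamma_j=1$ gives $6=6$), and neither $p<q<r$ nor $p+q<r$ produces a contradiction here, since the two coefficient sums $q+r$ and $p+r$ multiply \emph{different} row/column-sum profiles of $B$ rather than the same one. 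This is precisely why the paper's proof stops short of full block-diagonality: it only establishes $f(Z_1)=Z_2$ and $f(X_1\cup Y_1)=X_2\cup Y_2$, and then finishes by observing that the induced subgraphs on $X_i\cup Y_i$ are the two bipartite unfoldings $G_{J_{p,q}\otimes B}$ and $G_{J_{p,q}\otimes B^T}$, invoking the already-established characterization of isomorphism of bipartite unfoldings (Corollary 4.7 of \cite{kph-unf}, or Theorem 3.1 of \cite{ji-gong-wang}) together with $p\neq q$ to conclude $B$ is PET. That external ingredient (or an argument replacing it) is missing from your proposal; without it, your final application of Hammack's cancellation law to the $(1,2)$ block is not justified, because you have not shown that the $(1,2)$ block equation $P_1^T(J_{p,q}\otimes B)P_2=J_{p,q}\otimes B^T$ ever arises.
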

\begin{proof}
	Consider the following cases. 
	
	\textbf{Case 1:}  $B$ is a PST matrix. Then, there exists a permutation matrix $Q_0$ such that $Q_0^{-1}BQ_0=B^T$. Define $Q=\mathcal{I}(Q_0,Q_0,Q_0)$ and $R=\mathcal{I}(I_p,I_q,I_r)$. Then  $Q^{-1}AQ=A^{\tau}$ and $R^{-1}L(p,q,r)R=L(p,q,r)$. Define $P=R\underline{\otimes}Q$. 
	
	\textbf{Case 2:}  $p=r$. Define  $R=\mathcal{P}(I_p,I_q,I_r)$ and $Q=\mathcal{P}(I_n,I_n,I_n)$. Then  $R^{-1}L(p,q,r)R=L(p,q,r)$ and $Q^{-1}AQ=A^{\tau}$. Define $P=R\underline{\otimes}Q$. 
	
	In both the cases, the matrix $P$ is nonsingular, and satisfies $P^{-1}(L(p,q,r)\underline{\otimes}A)P=L(p,q,r)\underline{\otimes}A^{\tau}$. Thus, the graphs $G_{L(p,q,r)\underline{\otimes}  A}$ and $G_{L(p,q,r)\underline{\otimes}  A^{\tau}}$ are isomorphic. 
	
	Let the graphs $\Gamma_1 =G_{L(p,q,r)\underline{\otimes} A}$ and $\Gamma_2=G_{L(p,q,r)\underline{\otimes}  A^{\tau}}$ be isomorphic. Let $V(\Gamma_i)=X_i\cup Y_i\cup Z_i$ be the canonical vertex partition of the graphs $\Gamma_i$ for $i=1,2$. Let $f$ be an isomorphism from $\Gamma_1$ to $\Gamma_2$, and let $b_i$ and $b_i'$ denote the $i^{th}$ row sum of the matrices $B$ and $B^T$, respectively. 
	
	Let $x_1\in X_1$ be the vertex of maximum degree in $X_1$. Suppose that $f(x_1)\in Z_2$. Then $d_{\Gamma_1}(x_1)=(q+r)b_i$ for some $1\leq i \leq n$, and $d_{\Gamma_2}(f(x_1))=(p+q)b_j$ for some $1\leq j \leq n$. Since the isomorphism preserves the degree, we have $(q+r)b_i=(p+q)b_j$. Since $x_1$ has maximum degree in $X_1$, $b_i\geq b_j$ for any $1\leq j\leq n$, and hence $(p+q)b_j\geq (q+r)b_j$. If $b_j \neq 0$, then $p\geq r$, which contradicts the initial assumption that $p<r$. Hence, if $x_1\in X_1$, then $f(x_1)\notin Z_2$. If $b_j=0$, then since $(q+r)b_i=(p+q)b_j$, $b_i=0$. But $x_1$ is a vertex of maximum degree $(q+r)b_i$ in the set $X_1$, so $B=0$. So we could choose $f(x_1)\notin Z_2$. In any case, $f(x_1)\notin Z_2$. By the repeated removal of maximum degree vertices argument as in Lemma \ref{eta3-char}, we can conclude that $f(X_1)\cap Z_2 =\emptyset$. The supposition \lq $x_1\in X_2$ and $f^{-1}(x_1)\in Z_1$ for a vertex $x_1$ of maximum degree in the set $X_2$ \rq \;  contradicts the assumption $p<r$, and so we get $f^{-1}(X_2)\cap Z_1=\emptyset$. 
	
	Similarly, the suppositions \lq$y_1\in Y_1$ and $f(y_1)\in Z_2$ for a vertex $y_1$ of maximum degree in the set $Y_1$\rq \; and \lq$y_1\in Y_2$ and $f^{-1}(y_1)\in Z_1$ for a vertex $y_1$ of maximum degree in the set $Y_2$\rq \; both contradict the assumption $p+q<r$, and so we get $f(Y_1)\cap Z_2=\emptyset$ and $f^{-1}(Y_2)\cap Z_1=\emptyset$ respectively. Hence, $f(Z_1)=Z_2$ and $f(X_1\cup Y_1)=X_2\cup Y_2$. This shows the bipartite graphs induced by the sets $X_1\cup Y_1$ and $X_2\cup Y_2$ are isomorphic. Since $p\neq q$, using Corollary 4.7 \cite{kph-unf}, or Theorem 3.1 \cite{ji-gong-wang}, we can conclude that $B$ is a PET matrix. 
\end{proof}

\begin{ex}
	Let $B=\begin{bmatrix}1&1\\0&0\end{bmatrix}$ be a non-PST matrix. The corresponding graphs $G_A$ and $G^{A^\tau}$ are given in Figure 4. 
	
	\begin{figure}[h!]
		\centering
		\includegraphics[scale=0.4]{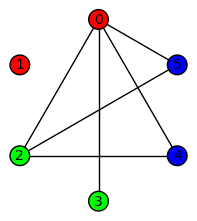}
		\includegraphics[scale=0.4]{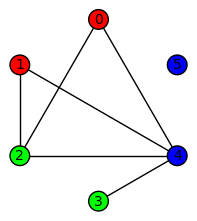}
		\caption{Tripartite graphs $G_A$ and $G_{A^{\tau}}$}
		\label{gh-ght}
	\end{figure}

	Note that  $Q=\frac{1}{\sqrt{2}}\begin{bmatrix}1&1\\1&-1\end{bmatrix}$ is orthogonal as well as involutory matrix satisfying $Q^{-1}BQ=B^T$. In Table \ref{lgh-lght}, the examples of cospectral non-isomorphic graphs $G_{L(p,q,r)\underline{\otimes} A}$ and $G_{L(p,q,r)\underline{\otimes} A^{\tau}}$ are generated for each tuple $(p,q,r)$ where $p\neq r$. Each tuple $(p,q,r)$ corresponds to a different way of unfolding the given tripartite graph $G_A$. 
	\begin{table}[h!]
		\centering
		\begin{tabular}{ | c| c| c | } 
			\hline
			(p,q,r)& $G_{L(p,q,r)\underline{\otimes} A}$ & $G_{L(p,q,r)\underline{\otimes} A^{\tau}}$  \\ 
			\hline
			(1,1,2) & \includegraphics[scale=0.4]{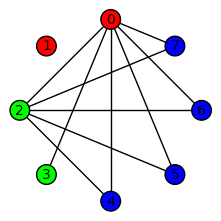} & \includegraphics[scale=0.4]{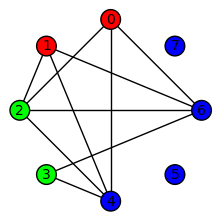} \\ 
			\hline
			(1,1,3) & \includegraphics[scale=0.4]{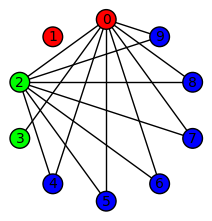} & \includegraphics[scale=0.4]{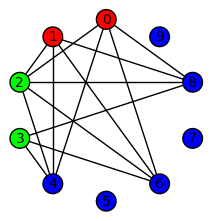} \\ 
			\hline
			(1,2,2) & \includegraphics[scale=0.4]{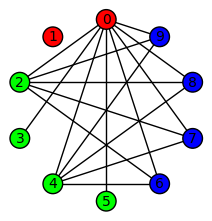} & \includegraphics[scale=0.4]{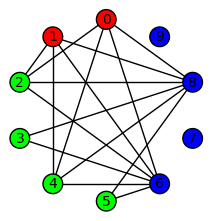} \\ 
			\hline
			(1,2,3) & \includegraphics[scale=0.4]{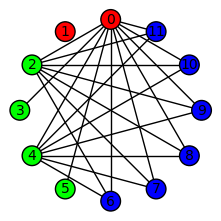} & \includegraphics[scale=0.4]{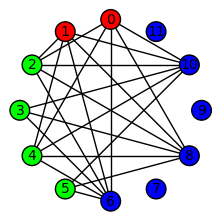} \\ 
			\hline
			(1,3,3) & \includegraphics[scale=0.4]{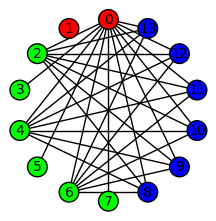}& \includegraphics[scale=0.4]{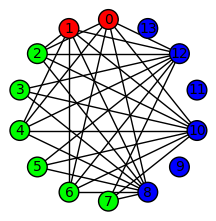}\\ 
			\hline
		\end{tabular}
		\caption{Unfoldings of tripartite graphs $G_{L(p,q,r)\underline{\otimes} A}$ and $G_{L(p,q,r)\otimes A^{\tau}}$}
		\label{lgh-lght}
	\end{table}
\end{ex}
\section*{Acknowledgments}
The authors thank the referees for the comments and suggestions.
\bibliographystyle{plain}
\nocite{hitesh}
\bibliography{references}

\begin{thebibliography}{10}

\bibitem{arvind2023hierarchy}
V.~Arvind, Frank Fuhlbr\"{u}ck, Johannes K\"{o}bler, and Oleg Verbitsky.
\newblock {On a Hierarchy of Spectral Invariants for Graphs}.
\newblock In Olaf Beyersdorff, Mamadou~Moustapha Kant\'{e}, Orna Kupferman, and
  Daniel Lokshtanov, editors, {\em 41st International Symposium on Theoretical
  Aspects of Computer Science (STACS 2024)}, volume 289 of {\em Leibniz
  International Proceedings in Informatics (LIPIcs)}, pages 6:1--6:18,
  Dagstuhl, Germany, 2024. Schloss Dagstuhl -- Leibniz-Zentrum f{\"u}r
  Informatik.

\bibitem{but-lama}
Steve Butler.
\newblock A note about cospectral graphs for the adjacency and normalized
  {L}aplacian matrices.
\newblock {\em Linear Multilinear Algebra}, 58(3-4):387--390, 2010.

\bibitem{dutta}
Supriyo Dutta and Bibhas Adhikari.
\newblock Construction of cospectral graphs.
\newblock {\em Journal of Algebraic Combinatorics}, 52(2):215--235, September
  2020.

\bibitem{godsil-mckay-1976}
C.~Godsil and B.~Mckay.
\newblock Products of graphs and their spectra.
\newblock In Louis R.~A. Casse and Walter~D. Wallis, editors, {\em
  Combinatorial {Mathematics} {IV}}, Lecture {Notes} in {Mathematics}, pages
  61--72, Berlin, Heidelberg, 1976. Springer.

\bibitem{godsil-mckay-1982}
C.~D. Godsil and B.~D. McKay.
\newblock Constructing cospectral graphs.
\newblock {\em Aequationes Math.}, 25(2-3):257--268, 1982.

\bibitem{haemers_are_2016}
W.~H. Haemers.
\newblock Are almost all graphs determined by their spectrum?
\newblock {\em Notices of the South African Mathematical Society}, 47:42--45,
  2016.

\bibitem{hammack}
Richard~H. Hammack.
\newblock Proof of a conjecture concerning the direct product of bipartite
  graphs.
\newblock {\em European Journal of Combinatorics}, 30(5):1114--1118, July 2009.

\bibitem{ji-gong-wang}
Yizhe Ji, Shicai Gong, and Wei Wang.
\newblock Constructing cospectral bipartite graphs.
\newblock {\em Discrete Mathematics}, 343(10):112020, October 2020.

\bibitem{kannan-pragada}
M.~Rajesh Kannan and Shivaramakrishna Pragada.
\newblock On the construction of cospectral graphs for the adjacency and the
  normalized {L}aplacian matrices.
\newblock {\em Linear Multilinear Algebra}, 70(15):3009--3030, 2022.

\bibitem{kph-unf}
M.~Rajesh Kannan, Shivaramakrishna Pragada, and Hitesh Wankhede.
\newblock On the construction of cospectral nonisomorphic bipartite graphs.
\newblock {\em Discrete Math.}, 345(8):Paper No. 112916, 8, 2022.

\bibitem{qiu2020theorem}
Lihong Qiu, Yizhe Ji, and Wei Wang.
\newblock On a theorem of {G}odsil and {M}c{K}ay concerning the construction of
  cospectral graphs.
\newblock {\em Linear Algebra Appl.}, 603:265--274, 2020.

\bibitem{vandam-haemers}
Edwin~R. van Dam and Willem~H. Haemers.
\newblock Which graphs are determined by their spectrum?
\newblock volume 373, pages 241--272. 2003.
\newblock Special issue on the Combinatorial Matrix Theory Conference (Pohang,
  2002).

\bibitem{devel-vandam-haemers}
Edwin~R. van Dam and Willem~H. Haemers.
\newblock Developments on spectral characterizations of graphs.
\newblock {\em Discrete Math.}, 309(3):576--586, 2009.

\bibitem{wang2019cospectral}
Wei Wang, Lihong Qiu, and Yulin Hu.
\newblock Cospectral graphs, {GM}-switching and regular rational orthogonal
  matrices of level {$p$}.
\newblock {\em Linear Algebra Appl.}, 563:154--177, 2019.

\bibitem{hitesh}
Hitesh Wankhede.
\newblock {\em Constructing cospectral graphs using partitioned tensor
  product}.
\newblock MS Thesis. IISER Pune, 2021.

\end{thebibliography}
\end{document}